 \theoremstyle{plain}
\newtheorem{thm}{Theorem}[section]
  \theoremstyle{definition}
  \newtheorem{defn}[thm]{Definition}
  \theoremstyle{plain}
  \newtheorem{lem}[thm]{Lemma}
  \theoremstyle{plain}
  \newtheorem{prop}[thm]{Proposition}
  \theoremstyle{plain}
  \newtheorem{cor}[thm]{Corollary}
\begin{document}

\title{From Matrix to Operator Inequalities}

\author{Terry A. Loring}

\address{Department of Mathematics and Statistics, University of New Mexico,
Albuquerque, NM 87131, USA.}

\keywords{$C*$-algebras, matrices, bounded operators, relations, operator
norm, order, commutator, exponential, residually finite dimensional.}

\subjclass{46L05, 47B99}

\urladdr{http://www.math.unm.edu/\textasciitilde{}loring/}

\maketitle
\markright {$C^*$-Algebra Relations (preprint version)}\markright {$C^*$-Algebra Relations (preprint version)}

\begin{center}
\ifthenelse{\boolean{Details}}{{\Large Fully Detailed Version}}{}
\par\end{center}

\begin{abstract}
We generalize L\"{o}wner's method for proving that matrix monotone
functions are operator monotone. The relation $x\leq y$ on bounded
operators is our model for a definition for $C^{*}$-relations of
being residually finite dimensional.

Our main result is a meta-theorem about theorems involving relations
on bounded operators. If we can show there are residually finite dimensional
relations involved, and verify a technical condition, then such a
theorem will follow from its restriction to matrices.

Applications are shown regarding norms of exponentials, the norms
of commutators and ``positive'' noncommutative $*$-polynomials.
\end{abstract}

\section{Introduction}

Our topic is bounded operators that satisfy relations that involve
algebraic relations, the operator norm, functional calculus and positivity.
The word positive, when applied to matrices, shall mean positive semidefinite. 

The $*$-strong topology can bridge the gap between representations
of relations by bounded operators on Hilbert space and representations
by matrices. This feature of the $*$-strong topology has been noted
before, for example by L\"{o}wner in \cite{BendatSherman}, or in
the context of residually finite dimensional $C^{*}$-algebras in
\cite{ExelLoringRFDfreeProd}.

This article is essentially independent of our previous paper \cite{LoringCstarRelations}
on $C^{*}$-relations. We minimize the role of universal $C^{*}$-algebras.
Perhaps someone will see how to strip out the $C^{*}$-algebras and
get a result that works for norms other than the operator norm.

Theorem~\ref{thm:RFDimpliesClosedReduction}, our main result, a
meta-theorem. We first define $C^{*}$-relation, and define on $C^{*}$-relations
the concepts of closed and residually finite dimensional (RFD). The
meta-theorem is that, given a theorem about matrices which states
that an RFD $C^{*}$-relation implies a closed $C^{*}$-relation,
we may conclude the same implication holds for all bounded operators.

The author is grateful for inspiring conversations with R.\ Bhatia,
T.\ Shulman, V.\ Shulman and S.\ Silvestrov.

\section{$C^{*}$-Relations}

\begin{defn}
Suppose $\mathcal{X}$ is a set. A statement $R$ about functions
$f:\mathcal{X}\rightarrow A$ into various $C^{*}$-algebras is a
\emph{$C^{*}$-relation} if the following four axioms hold:
\begin{description}
\item [{R1}] the unique function $\mathcal{X}\rightarrow\{0\}$ satisfies
$R;$
\item [{R2}] if $\varphi:A\hookrightarrow B$ is an injective $*$-homomorphism
and $\varphi\circ f:\mathcal{X}\rightarrow B$ satisfies $R$ for
some $f$ then $f:\mathcal{X}\rightarrow A$ also satisfies $R;$
\item [{R3}] if $\varphi:A\rightarrow B$ is a $*$-homomorphism and $f:\mathcal{X}\rightarrow A$
satisfies $R$ then $\varphi\circ f:\mathcal{X}\rightarrow B$ also
satisfies $R;$
\item [{R4f}] if $f_{j}:\mathcal{X}\rightarrow A_{j}$ satisfy $R$ for
$j=1,\ldots n$ the so does $f=\prod f_{j}$ where\[
f:\mathcal{X}\rightarrow\prod_{j=1}^{n}A_{j}\]
sends $x$ to $\left\langle f_{1}(x),\ldots,f_{n}(x)\right\rangle .$
\end{description}
\end{defn}
Examples of $C^{*}$-relations include the zero-sets of $*$-polynomials
in noncommuting variables, henceforth called NC $*$-polynomials.

When $\mathcal{X}=\{1,2,\ldots,m\},$ the case we really care about,
we use $a_{1},\ldots,a_{m}$ in place of the function notation $j(q)=a_{q}.$
Given a NC $*$-polynomial $p(x_{1},\ldots,x_{m})$ with constant
term zero, its zero-set is the $C^{*}$-relation\[
p(a_{1},\ldots,a_{m})=0.\]
Other $C^{*}$-relations associated to $p$ include\[
p(a_{1},\ldots,a_{m})\geq0\]
and\[
\left\Vert p(a_{1},\ldots,a_{m})\right\Vert \leq C\]
for a constant $C>0,$ as well as\[
\left\Vert p(a_{1},\ldots,a_{m})\right\Vert <C.\]

Given a set $\mathcal{R}$ of $C^{*}$-relations, a function $f:\mathcal{X}\rightarrow A$
to a $C^{*}$-algebra $A$ is called a \emph{representation} \emph{of}
$\mathcal{R}$ \emph{in} $A$ if every statement in $\mathcal{R}$
is true for $f.$ A function $\iota:\mathcal{X}\rightarrow U$ into
a $C^{*}$-algebra is \emph{universal} for $\mathcal{R}$ if $\iota$
is a representation of $\mathcal{R}$ and for every representation
$f:\mathcal{X}\rightarrow A$ of $\mathcal{R}$ there is a unique
$*$- homomorphism $\varphi:U\rightarrow A$ so that $\varphi\circ\iota=f.$

It is important to note that often there is no universal $C^{*}$-algebra
and no universal representation. See \cite{LoringCstarRelations}.

We use the notation $C^{*}\left\langle \mathcal{X}\left|\mathcal{R}\right.\right\rangle $
for $U$ and call it the universal $C^{*}$-algebra. Notice that universal
representation $\iota$ is usually what we should be talking about.
Notice also that $\iota$ need not be injective, but still we often
say that the representation $f:\mathcal{X}\rightarrow A$ of $\mathcal{R}$
extends to a unique $*$- homomorphism $\varphi:U\rightarrow A$ with
the requirement $\varphi(\iota(x))=f(x).$ A good exercise is to show
that $\iota(\mathcal{X})$ must generate $C^{*}\left\langle \mathcal{X}\left|\mathcal{R}\right.\right\rangle $
as a $C^{*}$-algebra. Alternately, this is clear from the proof of
Theorem 2.6 in \cite{LoringCstarRelations}.

Given a set $\mathcal{R}$ of $C^{*}$-relations on a set $\mathcal{X},$
we let $\textnormal{rep}_{\mathcal{R}}(\mathcal{X},A)$ denote the
set of all representations of $\mathcal{X}$ in $A.$ If $\mathbb{H}$
is a Hilbert space then we set \[
\textnormal{rep}_{\mathcal{R}}(\mathcal{X},\mathbb{H})=\textnormal{rep}_{\mathcal{R}}(\mathcal{X},\mathbb{B}(\mathbb{H})).\]

The notation\[
\prod_{\lambda\in\Lambda}A_{\lambda}\]
shall denote the $C^{*}$-algebra product consisting of all bounded
sequences or families $\left\langle a_{\lambda}\right\rangle _{\lambda\in\Lambda}$
that have $a_{\lambda}$ in $A_{\lambda}.$ Given a family of functions
$f_{\lambda}:\mathcal{X}\rightarrow A_{\lambda}$ we say it is \emph{bounded}
if $\sup_{\lambda}\left\Vert f_{\lambda}(x)\right\Vert $ is finite
for all $x$ in $\mathcal{X}.$ For such a bounded family we define
their \emph{product} to be the function \[
\prod_{\lambda\in\Lambda}f_{\lambda}:\mathcal{X}\rightarrow\prod_{\lambda\in\Lambda}A_{\lambda}\]
that sends $x$ to the family $\left\langle f_{\lambda}(x)\right\rangle .$ 

It could be argued that the above be called the sum of the representations,
c.f.\ Section II.6.1 in \cite{BlackadarOperatorAlgebras}. Notice
that given Hilbert spaces $\mathbb{H}_{\lambda}$ we have the inclusion
(block diagonal) \[
\prod_{\lambda}\mathbb{B}\left(\mathbb{H}_{\lambda}\right)\subseteq\mathbb{B}\left(\bigoplus_{\lambda}\mathbb{H}_{\lambda}\right).\]

A potential source of confusion is that we are talking about representations
of relations in $C^{*}$-algebras, but then often want to represent
various $C^{*}$-algebras on Hilbert space. Sometime we cut out the
middle man and talk of representations of relations on Hilbert space.
However, what we allow to be called $C^{*}$-relations on a set of
operators are properties that can be determined by how they sit in
the $C^{*}$-algebra they generate.

\begin{defn}
Suppose $\mathcal{R}$ is a set of $C^{*}$-relations on a set $\mathcal{X}.$
We say $\mathcal{R}$ is \emph{closed} if
\begin{description}
\item [{R4b}] for every bounded family $f_{\lambda}:\mathcal{X}\rightarrow A_{\lambda}$
of representations of $\mathcal{R}$ the product function $\prod f_{\lambda}$
is also a representation of $\mathcal{R}.$
\end{description}
We say $\mathcal{R}$ is \emph{compact} if it satisfies the stronger
axiom
\begin{description}
\item [{R4}] every family $f_{\lambda}:\mathcal{X}\rightarrow A_{\lambda}$
of representations of $\mathcal{R}$ is bounded and the associated
product function $\prod f_{\lambda}$ is a representation of $\mathcal{R}.$
\end{description}
Following Hadwin, Kaonga, Mathes (\cite{Hadwin-Kaonga-Mathes}) Phillips
(\cite{PhillipsProCstarAlg}) and others, we showed in \cite{LoringCstarRelations}
that $\mathcal{R}$ is compact if and only if there is a universal
$C^{*}$-algebra for $\mathcal{R}.$
\end{defn}
For example, \[
\left\{ x^{2}-x=0,\ x^{*}-x=0\right\} \]
is compact, and has universal $C^{*}$-algebra isomorphic to $\mathbb{C}.$
Also compact is \[
\left\{ \left\Vert x^{2}-x\right\Vert \leq\frac{1}{8},\ x^{*}-x=0\right\} \]
and this also has a universal $C^{*}$-algebra that is commutative.
On the other hand\[
\left\{ x^{2}-x=0\right\} \]
is closed but not compact, while\[
\left\{ \left\Vert x^{2}-x\right\Vert <\frac{1}{8},\ x^{*}-x=0\right\} \]
is not even closed.

It is sometimes easier to look at unital relations and unital $C^{*}$-algebras.
Everything here carries over. Notice we are not putting $1$ in $\mathcal{X},$
but use it symbolically in relations to stand for the unit in $A$
when considering a function $f:\mathcal{X}\rightarrow A.$ Rather
than introduce notation for this, we limit our discussion of unital
relations to quoting other papers.

The relations associated to NC $*$-polynomials are not the only interesting
$C^{*}$-relations. The relation\[
0\leq x\leq1\]
is compact, with universal $C^{*}$-algebra $C_{0}(0,1].$ Another
relation on $\{ x,y,z\}$ is\[
0\leq\left[\begin{array}{cc}
y & x^{*}\\
x & z\end{array}\right],\]
which is to be interpreted so that $X,$ $Y$ and $Z$ in a $C^{*}$-algebra
$A$ form a representation if and only if the matrix\[
\left[\begin{array}{cc}
Y & X^{*}\\
X & Z\end{array}\right]\]
is a positive element in $\mathbf{M}_{2}(A).$

Many results about operators relative to the operator norm, or positivity,
can be stated in the form where one set of $C^{*}$-relations implies
another. For example\[
x^{*}x=xx^{*}\implies\left[\begin{array}{cc}
\left|x\right| & x^{*}\\
x & \left|x\right|\end{array}\right]\geq0\]
or\[
0\leq k\leq1,\left\Vert h\right\Vert \leq1,\left\Vert hk-kh\right\Vert \leq\epsilon\implies\left\Vert hk^{\frac{1}{2}}-k^{\frac{1}{2}}h\right\Vert \leq\frac{5}{4}\epsilon\]
or\[
x^{*}x=xx^{*},\ yx=xy\implies yx^{*}=x^{*}y.\]
In many cases, such a theorem will follow from the special case of
that theorem restricted to the matrix case. Whether this leads to
a new result, a shorter proof of a known result, or a new but harder
proof of a known result, depends on the example. What seems interesting
is just how many theorems in the literature involve $C^{*}$-relations
that are residually finite dimensional.

\section{Residually Finite Dimensional $C^{*}$-Relations}

We certainly want to look at relations that are compact and have universal
$C^{*}$-algebra that is residually finite dimensional (RFD). For
example, for some $0<\epsilon\leq2$ the relations\[
u^{*}u=uu^{*}=v^{*}v=vv^{*}=1,\ \left\Vert uv-vu\right\Vert \leq\epsilon\]
have universal $C^{*}$-algebra that has been dubbed ``the soft torus''
by Exel, and this $C^{*}$-algebra has been shown to be residually
finite dimensional by Eilers and Exel in \cite{EilersExelSoftTorusRFD}. 

A $C^{*}$-algebra is \emph{residually finite dimensional} if there
is a separating family of representations of $A$ on finite dimensional
Hilbert spaces. 

The restriction to compact relations is artificial in operator theory.
A very important example is the relation $\left\Vert xy+yx\right\Vert \leq\epsilon$
on $\{ x,y\}.$ (The obvious ``and'' operation turns a set of $C^{*}$-relations
into a single relation, so we tend to use relation and set of relations
interchangeably.) We could discuss RFD $\sigma$-$C^{*}$-algebras,
but prefer to take as our starting point an alternate characterization
of RFD $C^{*}$-algebras in \cite{ExelLoringRFDfreeProd}.

On $\textnormal{rep}_{\mathcal{R}}(\mathcal{X},\mathbb{H})$ we will
consider the pointwise $*$-strong topology, whenever $\mathbb{H}$
is a Hilbert space and $\mathcal{R}$ is a $C^{*}$-relation on a
set $\mathcal{X}.$ 

Compare this to \[
\textnormal{rep}(A,\mathbb{H})=\left\{ \pi:A\rightarrow\mathbb{B}(\mathbb{H})\left|\ \pi\mbox{ is a }*\mbox{-homomorhpism}\right.\right\} \]
 for a $C^{*}$-algebra $A$ with the pointwise $*$-strong topology.
Equivalently, consider this with the pointwise strong topology. A
representation $\pi$ is said to be \emph{finite dimensional} if its
essential subspace is finite dimensional. The relevant result from
\cite{ExelLoringRFDfreeProd} is that $A$ is RFD if and only if for
all $\mathbb{H}$ the finite dimensional representations are dense
in $\textnormal{rep}(A,\mathbb{H}).$ For more characterizations of
a $C^{*}$-algebra being RFD see \cite{ArchboldRFD}.

We need a definition of finite dimensional for $f\in\textnormal{rep}_{\mathcal{R}}(\mathcal{X},\mathbb{H}).$
We define the \emph{essential subspace} of $f$ to be\[
\left\{ \xi\in\mathbb{H}\left|f(x)\xi=\left(f(x)\right)^{*}\xi=0,\ \forall x\in\mathcal{X}\right.\right\} ^{\perp}.\]
We say $f$ is \emph{finite dimensional} if its essential subspace
is finite dimensional. Notice this property has nothing to do with
$\mathcal{R}.$

If $\mathcal{R}$ is a compact set of $C^{*}$-relations then the
essential space of $f$ is the same as the essential space of the
associated representation $\pi$ of $C^{*}\left\langle \mathcal{X}\left|\mathcal{R}\right.\right\rangle .$
Thus $f$ is finite dimensional if and only if $\pi$ is finite dimensional.

\begin{defn}
A set of $C^{*}$-relations $\mathcal{R}$ on $\mathcal{X}$ is \emph{residually
finite dimensional (RFD)} if there are finite constants \[
C(x,r)\quad(x\in X,\ r\in[0,\infty))\]
 so that, for every $\mathbb{H}$ and any choice of nonnegative constants
$r_{x},$ every $f\in\textnormal{rep}_{\mathcal{R}}(\mathcal{X},\mathbb{H})$
satisfying\[
\| f(x)\|\leq r_{x}\quad(\forall x\in\mathcal{X})\]
is in the $*$-strong closure of \[
\left\{ g\in\textnormal{rep}_{\mathcal{R}}(\mathcal{X},\mathbb{H})\left|\ g\mbox{ is finite dimensional and }g(x)\leq C(x,r_{x})\ \forall x\in\mathcal{X}\right.\right\} .\]

\end{defn}
We say $f$ in $\textnormal{rep}_{\mathcal{R}}(\mathcal{X},\mathbb{H})$
is \emph{cyclic} if there is a vector $\xi$ so that $A\xi$ is dense
in $\mathbb{H},$ where $A=C^{*}(f(\mathcal{X})).$ Even if $A\xi$
is not dense, we call its closure a \emph{cyclic subspace} for $f.$
We say $f$ is \emph{unitarily equivalent} to $g$ in $\textnormal{rep}_{\mathcal{R}}(\mathcal{X},\mathbb{K})$
if there is a unitary $U:\mathbb{K}\rightarrow\mathbb{H}$ so that
$g(x)=U^{-1}f(x)U.$ If $\mathbb{K}$ is a reducing subspace for all
operators in $f(\mathcal{X})$ then let $U$ be the inclusion of $\mathbb{K}$
in $\mathbb{H}$ and let $g(x)=U^{*}f(x)U.$ By \textbf{R3} this is
also a representation and we call $g$ a \emph{subrepresentation of}
$f.$

\begin{lem}
Suppose $\mathcal{R}$ is a set of $C^{*}$-relations on $\mathcal{X}.$
Every Hilbert space representation of $\mathcal{R}$ is unitarily
equivalent to a product of cyclic representations.
\end{lem}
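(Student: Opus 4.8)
The plan is to mimic the classical Zorn's-lemma argument for decomposing a $*$-representation of a $C^*$-algebra into a direct sum of cyclic representations, being careful that each piece we extract is again a representation of $\mathcal{R}$ and that the orthogonal direct sum is too. Fix $f \in \textnormal{rep}_{\mathcal{R}}(\mathcal{X},\mathbb{H})$ and let $A = C^*(f(\mathcal{X})) \subseteq \mathbb{B}(\mathbb{H})$. First I would note that for any closed subspace $\mathbb{K} \subseteq \mathbb{H}$ that is reducing for every operator in $f(\mathcal{X})$ — equivalently, invariant under the (possibly nonunital) $*$-algebra generated by $f(\mathcal{X})$, hence reducing for $A$ — compression to $\mathbb{K}$ gives $g(x) = P_{\mathbb{K}} f(x) P_{\mathbb{K}}|_{\mathbb{K}}$, and this $g$ is a subrepresentation of $\mathcal{R}$ by \textbf{R3} (the compression map $a \mapsto P_{\mathbb{K}} a P_{\mathbb{K}}|_{\mathbb{K}}$ restricted to $A$ is a $*$-homomorphism onto $C^*(g(\mathcal{X}))$, since $\mathbb{K}$ reduces $A$), exactly as observed in the paragraph preceding the lemma.

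Next I would build the cyclic pieces. For any nonzero $\xi \in \mathbb{H}$, the closure $\mathbb{K}_\xi$ of $A\xi$ is a cyclic subspace for $f$; here one must check $\mathbb{K}_\xi$ is reducing. It is clearly invariant under $A$; for the orthogonal complement, if $\eta \perp A\xi$ then for $a,b \in A$ we have $\langle a\eta, b\xi\rangle = \langle \eta, a^*b\xi\rangle = 0$, so $A\eta \perp A\xi$, giving that $\mathbb{K}_\xi^\perp$ is also $A$-invariant, hence $\mathbb{K}_\xi$ reduces $A$ and in particular reduces every $f(x)$. Thus each $\mathbb{K}_\xi$ yields a cyclic subrepresentation. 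Then I would run the standard maximality argument: consider the collection of families $\{\mathbb{K}_i\}_{i\in I}$ of mutually orthogonal nonzero cyclic subspaces of the above form, ordered by inclusion of index families; Zorn's lemma gives a maximal such family, and maximality forces $\bigoplus_i \mathbb{K}_i = \mathbb{H}$, since otherwise a unit vector $\xi$ in the complement would generate a cyclic subspace $\mathbb{K}_\xi$ lying in that complement (because the complement is itself reducing) that could be adjoined, contradicting maximality.

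Finally I would assemble the conclusion. Writing $\mathbb{H} = \bigoplus_{i} \mathbb{K}_i$ with the inclusion unitary $U\colon \bigoplus_i \mathbb{K}_i \to \mathbb{H}$, and letting $f_i$ be the cyclic subrepresentation on $\mathbb{K}_i$, each $f_i$ satisfies $\mathcal{R}$; moreover on any finite subfamily the product $\prod f_i$ satisfies $\mathcal{R}$ by \textbf{R4f}, and passing to the full (bounded, since $\|f_i(x)\| \le \|f(x)\|$ for all $i$) family, $\prod_i f_i \in \textnormal{rep}_{\mathcal{R}}\bigl(\mathcal{X}, \prod_i \mathbb{B}(\mathbb{K}_i)\bigr)$ by the argument that a product of representations of a $C^*$-relation along a reducing decomposition is again a representation — here one uses that each coordinate projection is a $*$-homomorphism out of the product (\textbf{R3}) together with the fact that the relation only depends on how the operators sit inside the generated $C^*$-algebra, and $C^*((\prod_i f_i)(\mathcal{X}))$ embeds in $\prod_i \mathbb{B}(\mathbb{K}_i) \subseteq \mathbb{B}(\bigoplus_i \mathbb{K}_i)$. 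Under the block-diagonal inclusion $\prod_i \mathbb{B}(\mathbb{K}_i) \subseteq \mathbb{B}(\bigoplus_i \mathbb{K}_i)$ noted earlier, this product function is literally $x \mapsto U^{-1} f(x) U$, so $f$ is unitarily equivalent to $\prod_i f_i$, a product of cyclic representations.

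I expect the main obstacle to be the bookkeeping in the last step: verifying carefully that a product of representations taken along an \emph{orthogonal reducing decomposition} of a single representation genuinely satisfies $R$. Axiom \textbf{R4f} only covers finitely many factors, and the general closedness axiom \textbf{R4b} is not assumed for $\mathcal{R}$ here, so one cannot simply invoke it; instead the point is that the infinite product along a reducing decomposition of $f$ is not an arbitrary product but is $*$-isomorphic (as a concrete operator system sitting inside the generated $C^*$-algebra) to $f$ itself via $U$, and one should phrase this so that \textbf{R2}/\textbf{R3} do the work — $\prod_i f_i$ and $f$ generate the same $C^*$-algebra up to the isomorphism induced by $U$. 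Getting that identification stated cleanly, without accidentally needing a closedness hypothesis, is the delicate part.
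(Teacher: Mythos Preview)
Your proof is correct and follows the same Zorn's-lemma approach as the paper. Your final worry is unnecessary: since $\prod_i f_i = U^{-1} f(\,\cdot\,) U$ as functions into $\mathbb{B}\bigl(\bigoplus_i \mathbb{K}_i\bigr)$, the product is automatically a representation of $\mathcal{R}$ by \textbf{R3} applied to the conjugation $*$-isomorphism $a\mapsto U^{-1}aU$, so neither \textbf{R4f} nor any closedness axiom is needed---this is why the paper dispatches the assembly step in a single sentence.
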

\begin{proof}
\ifthenelse{\boolean{Details}}{ Suppose $f$ is a representation
of $\mathcal{R}$ on $\mathbb{H}.$ Consider the set of all sets of
pairwise orthogonal cyclic subspaces. Inclusion makes this a partial
order that is not empty; the set containing the zero subspace qualifies.
Upper bounds for chains clearly exist, so Zorn's Lemma tells us there
is a maximal set of orthogonal cyclic subspaces. If the union of these
subspaces is not dense, the orthogonal complement is a nontrivial
reducing subspace. Take any nonzero vector there, form the the cyclic
subspace it determines and add it to this set. This is a larger qualified
set, giving us a contradiction.

Since $\mathbb{H}$ is the internal direct sum of cyclic subspaces
for $f,$ each of which is of course reducing, we find $f$ is unitarily
equivalent to the product of cyclic representations.

}{The proof is almost identical to that of the same result for representations
of $C^{*}$-algebras.}
\end{proof}
\begin{lem}
\label{lem:RFDapproxCyclicSuffices} A set $\mathcal{R}$ of $C^{*}$-relations
is RFD if and only if there are finite constants $C(x,r)$ for $x\in X$
and $r\in[0,\infty))$ so that, for every $\mathbb{H},$ every cyclic
$f\in\textnormal{rep}_{\mathcal{R}}(\mathcal{X},\mathbb{H})$ is in
the $*$-strong closure of \[
\left\{ g\in\textnormal{rep}_{\mathcal{R}}(\mathcal{X},\mathbb{H})\left|\ g\mbox{ is finite dimensional and }g(x)\leq C(x,\| f(x)\|)\right.\right\} .\]

\end{lem}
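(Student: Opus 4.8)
The forward implication should be immediate: if $\mathcal R$ is RFD with constants $C(x,r)$ and $f$ is cyclic, one applies the definition of RFD with $r_x:=\|f(x)\|$, and the approximation that results is exactly the one asserted here. So all the content lies in the converse, and the plan there is to split an arbitrary representation into its cyclic pieces, approximate the pieces using the hypothesis, and reassemble --- taking care that only the \emph{finite} product axiom \textbf{R4f} is ever invoked.

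Assume then that the cyclic approximation property holds with constants $C(x,r)$; enlarging them, I may assume each $r\mapsto C(x,r)$ is non-decreasing. Fix a Hilbert space $\mathbb H$, nonnegative constants $r_x$, and $f\in\textnormal{rep}_{\mathcal R}(\mathcal X,\mathbb H)$ with $\|f(x)\|\le r_x$ for every $x$. By the preceding lemma I may assume, after a unitary equivalence, that $\mathbb H=\bigoplus_{\lambda\in\Lambda}\mathbb H_\lambda$ and $f=\prod_{\lambda}f_\lambda$ with each $f_\lambda$ cyclic on $\mathbb H_\lambda$; since each $\mathbb H_\lambda$ reduces $f(\mathcal X)$, each $f_\lambda$ is a subrepresentation, so $\|f_\lambda(x)\|\le\|f(x)\|\le r_x$. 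The goal is to show $f$ lies in the $*$-strong closure of $G:=\{\,g\in\textnormal{rep}_{\mathcal R}(\mathcal X,\mathbb H)\mid g\text{ is finite dimensional and }\|g(x)\|\le C(x,r_x)\text{ for all }x\,\}$.

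Now fix a basic $*$-strong neighbourhood of $f$, determined by a finite $E\subseteq\mathcal X$, vectors $\xi_1,\dots,\xi_k\in\mathbb H$, and $\epsilon>0$. First I would choose a finite $F\subseteq\Lambda$ so that each $\xi_j$ is within a small $\delta$ of its compression $P_F\xi_j$ to $\mathbb H_F:=\bigoplus_{\lambda\in F}\mathbb H_\lambda$. Then I would apply the cyclic hypothesis to each $f_\lambda$, $\lambda\in F$, at the vectors $P_\lambda\xi_1,\dots,P_\lambda\xi_k$, to get finite dimensional $g_\lambda\in\textnormal{rep}_{\mathcal R}(\mathcal X,\mathbb H_\lambda)$ with $\|g_\lambda(x)\|\le C(x,\|f_\lambda(x)\|)\le C(x,r_x)$ and with $g_\lambda$ as close to $f_\lambda$ as desired at those vectors. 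Finally I would set $g:=\bigl(\prod_{\lambda\in F}g_\lambda\bigr)\oplus 0$ on $\mathbb H=\mathbb H_F\oplus\mathbb H_F^{\perp}$; a routine estimate --- the error at each $\xi_j$ splits as a finite sum over $\lambda\in F$ (made small by the choice of the $g_\lambda$) plus a tail bounded by $\|f(x)\|\,\|(1-P_F)\xi_j\|\le r_x\delta$, with $\delta$ chosen small relative to $\epsilon$ and $\max_{x\in E}r_x$ --- shows $g$ lies in the neighbourhood. Since $\|g(x)\|\le C(x,r_x)$ by construction, $g\in G$, and as the neighbourhood was arbitrary this completes the converse.

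The step I expect to need the most care, and the reason this lemma is not quite trivial, is checking that $g$ really is a representation of $\mathcal R$: we are \emph{not} assuming $\mathcal R$ is closed, so the full product $\prod_{\lambda\in\Lambda}g_\lambda$ need not satisfy $\mathcal R$ via \textbf{R4b}. That is exactly why the construction uses only finitely many cyclic summands and pads with zeros. With $F$ finite, $\prod_{\lambda\in F}g_\lambda$ is a representation by \textbf{R4f}, the zero map on $\mathbb H_F^{\perp}$ is one by \textbf{R1}, and $g$ is the two-term product of these, hence a representation by \textbf{R4f}; and $g$ is finite dimensional because a finite direct sum of finite dimensional essential subspaces is finite dimensional. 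The remaining points --- that unitary equivalence preserves membership in $\textnormal{rep}_{\mathcal R}$, finite dimensionality, norms, and $*$-strong closures, and that a subrepresentation has norm at most that of the ambient representation --- are routine.
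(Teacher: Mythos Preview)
Your proof is correct and follows essentially the same strategy as the paper: decompose into cyclic summands, truncate to finitely many of them, approximate each finite-dimensionally via the hypothesis, and extend by zero, invoking only the finite-product axiom (the paper uses \textbf{R4f} for the finite product and then \textbf{R3} for the zero-padding embedding $\prod_{\lambda\in F}\mathbb B(\mathbb H_\lambda)\hookrightarrow\mathbb B(\mathbb H)$; you use \textbf{R1} plus a two-fold \textbf{R4f}---either works). You are in fact slightly more careful than the paper in one spot: the cyclic hypothesis only gives $\|g_\lambda(x)\|\le C(x,\|f_\lambda(x)\|)$, and it is your preliminary passage to non-decreasing $C(x,\cdot)$ that lets you conclude $\|g(x)\|\le C(x,r_x)$ from $\|f_\lambda(x)\|\le r_x$.
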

\begin{proof}
The forward implication is obvious, so assume the condition on the
cyclic representations holds for some choice of $C(x,r).$

We may as well assume\[
f=\prod_{\gamma\in\Gamma}f_{\gamma}\]
where $f_{\lambda}$ is a cyclic representation on $\mathbb{H}_{\gamma}$
and $\mathbb{H}=\bigoplus\mathbb{H}_{\gamma}.$ Suppose $\epsilon>0$
and $\xi=\left\langle \xi_{\gamma}\right\rangle $ is a unit vector.
There is a finite set $\Gamma_{0}$ so that when we define $\eta=\left\langle \eta_{\gamma}\right\rangle $
by\[
\eta_{\gamma}=\left\{ \begin{array}{cc}
\xi_{\gamma} & \mbox{if }\gamma\in\Gamma_{0}\\
0 & \mbox{if }\gamma\notin\Gamma_{0}\end{array}\right.\]
we have $\left\Vert \xi-\eta\right\Vert \leq\delta$ for \[
\delta=\frac{\epsilon}{2}\left(\| f(x)\|+C(x,\| f(x)\|)\right)^{-1}.\]
(Without loss of generality, $C(x,r)\neq0.$) Suppose $\Gamma_{0}$
has $q$ elements. For each $\gamma$ in $\Gamma_{0}$ there is a
finite dimensional representation $g_{\gamma}:\mathcal{X}\rightarrow\mathbb{H}_{\gamma}$
so that\[
\left\Vert g_{\gamma}(x)\right\Vert \leq C(x,\| f(x)\|)\]
and\[
\left\Vert g_{\gamma}(x)\xi_{\gamma}-f_{\gamma}(x)\xi_{\gamma}\right\Vert ,\left\Vert \left(g_{\gamma}(x)\right)^{*}\xi_{\gamma}-\left(f_{\gamma}(x)\right)^{*}\xi_{\gamma}\right\Vert \leq\frac{\epsilon}{2q}.\]
For $\gamma\notin\Gamma_{0}$ set $g_{\gamma}(x)=0.$ Let $g=\prod_{\gamma}g_{\gamma},$
which is a representation, first in $\prod_{\gamma\in\Gamma_{0}}\mathbb{B}(\mathbb{H}_{\gamma})$
by \textbf{R4f}, and then on $\mathbb{H}$ by \textbf{R3}. It satisfies
the norm condition since\[
\left\Vert g(x)\right\Vert =\sup_{\gamma}\left\Vert g_{\gamma}(x)\right\Vert .\]
The essential space of $g$ is just the sum of the orthogonal essential
spaces of the $g_{\gamma}$ for $\gamma\in\Gamma_{0}$ and so $g$
is also finite dimensional. For each $x$ we have\begin{eqnarray*}
\left\Vert g(x)\xi-f(x)\xi\right\Vert  & \leq & \left\Vert g(x)-f(x)\right\Vert \left\Vert \xi-\eta\right\Vert +\left\Vert g(x)\eta-f(x)\eta\right\Vert \\
 & \leq & \left(\| f(x)\|+C(x,\| f(x)\|)\right)\delta+\sum_{\gamma\in\Gamma_{0}}\left\Vert g_{\gamma}(x)\xi_{\gamma}-f_{\gamma}(x)\xi_{\gamma}\right\Vert \\
 & \leq & \left(\| f(x)\|+C(x,\| f(x)\|)\right)\delta+q\rho\\
 & = & \epsilon\end{eqnarray*}
and \ifthenelse{\boolean{Details}}{\begin{eqnarray*}
\left\Vert \left(g(x)\right)^{*}\xi-\left(f(x)\right)^{*}\xi\right\Vert  & \leq & \left\Vert g(x)-f(x)\right\Vert \left\Vert \xi-\eta\right\Vert +\left\Vert \left(g(x)\right)^{*}\eta-\left(f(x)\right)^{*}\eta\right\Vert \\
 & \leq & \left(\| f(x)\|+C(x,\| f(x)\|)\right)\delta+\sum_{\gamma\in\Gamma_{0}}\left\Vert \left(g_{\gamma}(x)\right)^{*}\xi_{\gamma}-\left(f_{\gamma}(x)\right)^{*}\xi_{\gamma}\right\Vert \\
 & \leq & \left(\| f(x)\|+C(x,\| f(x)\|)\right)\delta+q\rho\\
 & = & \epsilon.\end{eqnarray*}
}{ similarly\[
\left\Vert \left(g(x)\right)^{*}\xi-\left(f(x)\right)^{*}\xi\right\Vert \leq\epsilon.\]
}
\end{proof}
\begin{prop}
If $\mathcal{R}$ is a compact set of $C^{*}$-relations on a set
$\mathcal{R}$ then $\mathcal{R}$ is RFD if and only if $C^{*}\left\langle \mathcal{X}\left|\mathcal{R}\right.\right\rangle $
is RFD.
\end{prop}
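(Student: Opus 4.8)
The plan is to use the characterization of RFD $C^{*}$-algebras via density of finite dimensional representations from \cite{ExelLoringRFDfreeProd}, together with the fact (noted just before the Proposition) that for a compact set of relations the essential subspace of a representation $f$ coincides with that of the associated $*$-homomorphism $\pi$ on $U = C^{*}\left\langle\mathcal{X}\left|\mathcal{R}\right.\right\rangle$. The correspondence $f \leftrightarrow \pi$ is a bijection between $\textnormal{rep}_{\mathcal{R}}(\mathcal{X},\mathbb{H})$ and $\textnormal{rep}(U,\mathbb{H})$, and it carries the pointwise $*$-strong topology on the left to the pointwise $*$-strong topology on the right, since $\iota(\mathcal{X})$ generates $U$ and $*$-strong convergence on a generating set (with uniformly bounded nets) implies $*$-strong convergence on the generated $C^{*}$-algebra. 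Both directions will hinge on transporting density statements across this bijection.

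For the forward direction, suppose $\mathcal{R}$ is RFD. Given $\mathbb{H}$ and $\pi \in \textnormal{rep}(U,\mathbb{H})$, let $f$ be the corresponding representation of $\mathcal{R}$. Since $\mathcal{R}$ is compact, every representation is automatically bounded, so in particular $\|f(x)\| \le r_{x}$ for some constants $r_{x}$; in fact compactness forces a uniform bound $r_{x}$ independent of $f$ (any family of representations is bounded, hence $\sup_{f}\|f(x)\|<\infty$). The RFD hypothesis then gives finite-dimensional $g$ in $\textnormal{rep}_{\mathcal{R}}(\mathcal{X},\mathbb{H})$, $*$-strongly close to $f$, with $\|g(x)\| \le C(x,r_{x})$; the norm bound here is not needed for this direction. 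Each such $g$ corresponds to a finite-dimensional $*$-homomorphism (its essential subspace equals that of $g$, which is finite dimensional), and $*$-strong closeness of $g$ to $f$ on $\mathcal{X}$ yields $*$-strong closeness of the corresponding $*$-homomorphisms on all of $U$. Hence the finite-dimensional representations are dense in $\textnormal{rep}(U,\mathbb{H})$ for every $\mathbb{H}$, so $U$ is RFD.

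For the converse, suppose $U$ is RFD. We verify the condition of Lemma~\ref{lem:RFDapproxCyclicSuffices}, so it suffices to approximate cyclic representations. Here is where the norm constants $C(x,r)$ must be produced, and this is the main obstacle: the abstract RFD property of $U$ gives finite-dimensional $*$-homomorphisms $\rho$ $*$-strongly approximating a given $\pi$, but says nothing a priori about $\|\rho(\iota(x))\|$. The key point is that $\iota(x)$ is a fixed element of $U$, so $\|\rho(\iota(x))\| \le \|\iota(x)\|$ for every $*$-homomorphism $\rho$; thus $C(x,r) := \|\iota(x)\|$ works uniformly (this constant does not even depend on $r$, which is more than enough, and one may harmlessly enlarge it so that $C(x,r)\neq 0$). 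Concretely: given a cyclic $f \in \textnormal{rep}_{\mathcal{R}}(\mathcal{X},\mathbb{H})$ with corresponding $\pi$, choose finite-dimensional $*$-homomorphisms $\rho_{i} \to \pi$ $*$-strongly; let $g_{i}$ be the corresponding representations of $\mathcal{R}$, which are finite dimensional (essential subspaces match), satisfy $\|g_{i}(x)\| = \|\rho_{i}(\iota(x))\| \le \|\iota(x)\| = C(x,\|f(x)\|)$, and converge $*$-strongly to $f$. This places $f$ in the required $*$-strong closure, so $\mathcal{R}$ is RFD by Lemma~\ref{lem:RFDapproxCyclicSuffices}. The only genuinely technical verification is the claim that the $f \leftrightarrow \pi$ bijection is a $*$-strong homeomorphism; that reduces to an $\varepsilon/3$ argument using that polynomials in $\iota(\mathcal{X})$ and $\iota(\mathcal{X})^{*}$ are dense in $U$ and that $\|\pi\| \le 1$ controls the error on the remainder.
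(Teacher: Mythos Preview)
Your proposal is correct and follows the same route as the paper: the paper's proof simply says ``By the discussion above, this follows directly from Theorem~2.4 in \cite{ExelLoringRFDfreeProd},'' and you have unpacked precisely that discussion---the bijection $f\leftrightarrow\pi$, the matching of essential subspaces, and the transport of $*$-strong density across it. Your observation that $C(x,r)=\|\iota(x)\|$ furnishes the required constants (since $\|\rho(\iota(x))\|\le\|\iota(x)\|$ for any $*$-homomorphism $\rho$) is the one detail the paper leaves entirely implicit, and your handling of it is clean.
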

\begin{proof}
By the discussion above, this follows directly from Theorem 2.4 in
\cite{ExelLoringRFDfreeProd}.
\end{proof}
Every $C^{*}$-algebra is isomorphic to the universal $C^{*}$-algebra
of some $C^{*}$-relations, c.f.\  Section 2 in \cite{LoringCstarRelations}.
There is an abundant supply of RFD $C^{*}$-algebras and so an abundant
supply of RFD $C^{*}$-relations. Examples include the subhomogeneous
$C^{*}$-algebras.

Given a specific $C^{*}$-algebra it can be difficult to find a nice
universal set of generator and relations. Conversely, given a set
of $C^{*}$-relations, it can be difficult to get a description of
its universal $C^{*}$-algebra that is more useful than the given
universal property. For present purposes it is best to work directly
with representations of $C^{*}$-relations.

\begin{lem}
Suppose $\mathcal{\mathcal{R}}$ is a set of $C^{*}$-relations $\mathcal{R}$
on $\mathcal{X}.$ If $\mathcal{R}$ is closed and $\left\langle f_{\lambda}\right\rangle _{\lambda\in\Lambda}$
is a bounded net in $\textnormal{rep}(A,\mathbb{H})$ that converges
to the function $f:\mathcal{X}\rightarrow\mathbb{B}(\mathbb{H})$
then $f\in\textnormal{rep}(A,\mathbb{H}).$ 
\end{lem}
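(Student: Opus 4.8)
(I read $\textnormal{rep}(A,\mathbb{H})$ here as $\textnormal{rep}_{\mathcal{R}}(\mathcal{X},\mathbb{H})$, with convergence understood in the pointwise $*$-strong topology.) The plan is to pack the whole net into a single representation on a Hilbert-space ultrapower, and then recover $f$ as a compression of it to a reducing subspace. First I would fix an ultrafilter $\mathcal{U}$ on the directed set $\Lambda$ that dominates the tail filter, so that $\{\lambda\in\Lambda:\lambda\geq\lambda_{0}\}\in\mathcal{U}$ for every $\lambda_{0}$; the tail sets form a proper filter by directedness, so such a $\mathcal{U}$ exists. Let $\mathbb{H}_{\mathcal{U}}$ be the ultrapower Hilbert space: bounded families $(\xi_{\lambda})$ in $\mathbb{H}$ modulo those with $\lim_{\mathcal{U}}\|\xi_{\lambda}\|=0$, with inner product $\langle(\xi_{\lambda}),(\eta_{\lambda})\rangle=\lim_{\mathcal{U}}\langle\xi_{\lambda},\eta_{\lambda}\rangle$. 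Writing $\mathcal{B}=\prod_{\lambda}\mathbb{B}(\mathbb{H})$, the formula $\rho(\langle a_{\lambda}\rangle)[(\xi_{\lambda})]=[(a_{\lambda}\xi_{\lambda})]$ defines a $*$-homomorphism $\rho\colon\mathcal{B}\to\mathbb{B}(\mathbb{H}_{\mathcal{U}})$ (well defined and contractive because $\sup_{\lambda}\|a_{\lambda}\|<\infty$), and $U\colon\mathbb{H}\to\mathbb{H}_{\mathcal{U}}$, $U\xi=[(\xi)_{\lambda}]$, is an isometry onto the classes of constant families.

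Next I would assemble the representation. Since the net $\langle f_{\lambda}\rangle$ is bounded, $F=\prod_{\lambda}f_{\lambda}\colon\mathcal{X}\to\mathcal{B}$ is defined, and because $\mathcal{R}$ is closed, axiom \textbf{R4b} gives $F\in\textnormal{rep}_{\mathcal{R}}(\mathcal{X},\mathcal{B})$; composing with $\rho$ and invoking \textbf{R3} yields $\rho\circ F\in\textnormal{rep}_{\mathcal{R}}(\mathcal{X},\mathbb{H}_{\mathcal{U}})$. The crux is then to show that $U(\mathbb{H})$ is a reducing subspace for every operator $\rho(F(x))$ and that compressing to it returns $f$. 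For $\xi\in\mathbb{H}$ one computes $\rho(F(x))U\xi=[(f_{\lambda}(x)\xi)_{\lambda}]$ and $\rho(F(x))^{*}U\xi=\rho(F(x)^{*})U\xi=[(f_{\lambda}(x)^{*}\xi)_{\lambda}]$. Since $f_{\lambda}(x)\xi\to f(x)\xi$ and $f_{\lambda}(x)^{*}\xi\to f(x)^{*}\xi$ in norm along the net, while $\mathcal{U}$ dominates the tail filter, the $\mathcal{U}$-limits of $\|f_{\lambda}(x)\xi-f(x)\xi\|$ and $\|f_{\lambda}(x)^{*}\xi-f(x)^{*}\xi\|$ vanish, so $\rho(F(x))U\xi=Uf(x)\xi$ and $\rho(F(x))^{*}U\xi=Uf(x)^{*}\xi$. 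Hence $U(\mathbb{H})$ is invariant under every $\rho(F(x))$ and every $\rho(F(x))^{*}$, i.e.\ it is reducing for all operators in $(\rho\circ F)(\mathcal{X})$, and $U^{*}\rho(F(x))U=f(x)$. By the subrepresentation construction recorded above (compression to a common reducing subspace is a $*$-homomorphism on the generated $C^{*}$-algebra, so \textbf{R3} applies), $x\mapsto U^{*}\rho(F(x))U$ lies in $\textnormal{rep}_{\mathcal{R}}(\mathcal{X},\mathbb{H})$; but this map is precisely $f$, so $f\in\textnormal{rep}_{\mathcal{R}}(\mathcal{X},\mathbb{H})$.

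The step I expect to be the genuine obstacle is conceptual rather than computational: because the net converges only $*$-strongly and not in norm, one cannot read off the limit by passing to a norm quotient of $\mathcal{B}$ (say by $c_{0}$); the ultrapower Hilbert space, together with an ultrafilter refining the tail filter, is precisely the device that lets a $*$-strong net limit sit inside a single representation that can then be compressed. Everything else — that $\rho$ is a $*$-homomorphism, that $U$ is an isometry, and that compression to a reducing subspace preserves representations — is routine or already recorded in the excerpt.
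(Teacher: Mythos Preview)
Your argument is correct, but it takes a considerably more elaborate route than the paper's. Both proofs begin the same way: by boundedness and \textbf{R4b}, $F=\prod_{\lambda}f_{\lambda}$ is a representation of $\mathcal{R}$ in $\prod_{\lambda}\mathbb{B}(\mathbb{H})$, and the task is to produce a $*$-homomorphism carrying $F$ to $f$ so that \textbf{R3} finishes the job. You build that $*$-homomorphism in two stages --- first $\rho$ into $\mathbb{B}(\mathbb{H}_{\mathcal{U}})$, then compression to the reducing copy of $\mathbb{H}$ --- using an ultrafilter refining the tail filter to convert $*$-strong convergence along the net into equality of ultrapower classes. The paper instead observes directly that the set of bounded nets $\langle a_{\lambda}\rangle$ in $\prod_{\lambda}\mathbb{B}(\mathbb{H})$ that are $*$-strongly convergent is itself a $C^{*}$-subalgebra (joint $*$-strong continuity of multiplication on bounded sets), and that the limit map $L\bigl(\langle a_{\lambda}\rangle\bigr)=\lim_{\lambda}a_{\lambda}$ is a $*$-homomorphism into $\mathbb{B}(\mathbb{H})$; since the hypothesis places $F$ in that subalgebra, $f=L\circ F$ is a representation by \textbf{R3} in one step.

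Your closing remark that one ``cannot read off the limit by passing to a norm quotient of $\mathcal{B}$'' is true, but the paper's device shows that no ultrapower is needed either: the $*$-strong limit map on bounded convergent nets already is the desired $*$-homomorphism. What your approach buys is a template that would generalize to settings where one only has convergence along an ultrafilter rather than an honest net limit; what the paper's approach buys is brevity and the avoidance of any auxiliary Hilbert space.
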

\begin{proof}
The key is noticing inside\[
\prod_{\lambda\in\Lambda}\mathbb{B}(\mathbb{H})\]
the $C^{*}$-algebra $A$ of all bounded nets $\left\langle a_{\lambda}\right\rangle $
indexed by $\Lambda$ that have $*$-strong limits\[
L\left(\left\langle a_{\lambda}\right\rangle \right)=\lim_{\lambda}a_{\lambda}\quad(*\mbox{-strong}).\]
Recall, say from \cite[I.3.2.1]{BlackadarOperatorAlgebras}, that
we need boundedness to gain joint continuity of multiplication in
the $*$-strong topology. Here we let $\lambda$ range over the directed
set $\Lambda$ that indexes the net $f_{\lambda}.$

The $f_{\lambda}$ form a bounded family of representations, so $\prod f_{\lambda}$
determines a representation of $\mathcal{R}$ in $A.$ Now we use
the naturality property for $C^{*}$-relations and conclude $f=L\circ\prod f_{\lambda}$
is a representation.
\end{proof}
Now the main theorem.

\begin{thm}
\label{thm:RFDimpliesClosedReduction} Suppose $\mathcal{R}$ and
$\mathcal{S}$ are $C^{*}$-relations on $\mathcal{X}.$ If $\mathcal{R}$
is residually finite dimensional and $\mathcal{S}$ is closed, and
if every finite-dimensional representation of $\mathcal{R}$ is a
representation of $\mathcal{S},$ then every representation of $\mathcal{R}$
is a representation of $\mathcal{S}.$
\end{thm}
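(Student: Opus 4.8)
The plan is to take an arbitrary Hilbert space representation $f\in\textnormal{rep}_{\mathcal{R}}(\mathcal{X},\mathbb{H})$ and show it satisfies $\mathcal{S}$. By the first lemma we may assume $f$ is a product of cyclic representations, and by Lemma~\ref{lem:RFDapproxCyclicSuffices} together with the closedness of $\mathcal{S}$ (which passes to products via \textbf{R4b}), it suffices to handle the case where $f$ is cyclic. So fix a cyclic $f$ on $\mathbb{H}$, let $r_x=\|f(x)\|$, and invoke the RFD hypothesis: there are constants $C(x,r_x)$ and a net $\langle g_\mu\rangle$ of finite-dimensional representations of $\mathcal{R}$, with $\|g_\mu(x)\|\le C(x,r_x)$ for all $x$, converging $*$-strongly to $f$. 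Each $g_\mu$ is finite-dimensional, hence by hypothesis is a representation of $\mathcal{S}$.

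Now I would apply the preceding lemma on bounded nets: the net $\langle g_\mu\rangle$ is bounded (uniformly in $\mu$, by the constants $C(x,r_x)$ which do not depend on $\mu$), it lies in $\textnormal{rep}_{\mathcal{S}}(\mathcal{X},\mathbb{H})$, and it converges $*$-strongly to $f$. Since $\mathcal{S}$ is closed, that lemma gives $f\in\textnormal{rep}_{\mathcal{S}}(\mathcal{X},\mathbb{H})$. This settles the cyclic case. For the general case, write $f\simeq\prod_{\gamma}f_\gamma$ with each $f_\gamma$ cyclic; by what we just proved each $f_\gamma$ represents $\mathcal{S}$; by \textbf{R4b} the bounded product $\prod_\gamma f_\gamma$ represents $\mathcal{S}$; and by \textbf{R3} (or \textbf{R2}) unitary equivalence preserves representations, so $f$ represents $\mathcal{S}$ as well.

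The one point that needs care — and is really the reason Lemma~\ref{lem:RFDapproxCyclicSuffices} is stated the way it is — is \emph{uniform} boundedness of the approximating net. The definition of RFD supplies a single constant $C(x,r)$ depending only on $x$ and on the bound $r$ for $\|f(x)\|$, not on the individual $g$'s, so the family of $g_\mu$ is genuinely bounded and the bounded-net lemma applies; without this uniformity one could not invoke the joint $*$-strong continuity of multiplication that makes that lemma work. A second small subtlety is that the RFD definition a priori concerns $\textnormal{rep}_{\mathcal{R}}(\mathcal{X},\mathbb{H})$ for a fixed $\mathbb{H}$, which is exactly what we have here, so no change of Hilbert space is needed. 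Everything else is bookkeeping with the axioms \textbf{R1}--\textbf{R4f} and \textbf{R4b}.
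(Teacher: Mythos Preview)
Your proof is correct and follows the same core strategy as the paper: approximate $f$ by a bounded net of finite-dimensional representations of $\mathcal{R}$ (via RFD), observe these satisfy $\mathcal{S}$ by hypothesis, and invoke the bounded-net lemma to pass to the $*$-strong limit. The only difference is that your reduction to cyclic representations is superfluous---the \emph{definition} of RFD already furnishes the approximation for \emph{every} $f\in\textnormal{rep}_{\mathcal{R}}(\mathcal{X},\mathbb{H})$, not just cyclic ones (Lemma~\ref{lem:RFDapproxCyclicSuffices} is a tool for \emph{establishing} that a relation is RFD, not for \emph{applying} the RFD property), so the paper simply invokes RFD directly on an arbitrary $f$ and omits the decomposition and reassembly steps.
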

\begin{proof}
Given a representation $f:\mathcal{X}\rightarrow\mathbb{B}(\mathbb{H})$
of $\mathcal{R},$ the fact that $\mathcal{R}$ is RFD tells us that
there are functions $f_{\lambda}:\mathcal{X}\rightarrow\mathbb{B}(\mathbb{H})$
with finite dimensional essential spaces that are representations
of $\mathcal{R}$ and so that $f_{\lambda}(x)$ converges $*$-strongly
to $f(x).$ By assumption, the $f_{\lambda}$ are also representation
of $\mathcal{S}$ and, since $\mathcal{S}$ is closed, we conclude
that $f$ is a representation of $\mathcal{S}.$
\end{proof}

\section{Examples}

It is easy to find lots of closed $C^{*}$-relations. Here are enough
to keep us busy while we attack the harder problem of finding RFD
$C^{*}$-relations. Blackadar noted in \cite{Blackadar-shape-theory}
the importance of ``softening'' a relation $p(x_{1},\ldots,x_{n})=0$
to $\| p(x_{1},\ldots,x_{n})\|\leq\epsilon.$ 

\begin{prop}
\label{pro:NCpolyRelationsAreClosed} Suppose $\epsilon>0$ is real
number. If $p$ is a NC $*$-polynomial in $x_{1},\ldots,x_{n},$
with constant term zero, then each of\[
p\left(x_{1},\ldots,x_{n}\right)=0,\]
\[
p\left(x_{1},\ldots,x_{n}\right)\geq0,\]
\[
\left\Vert p\left(x_{1},\ldots,x_{n}\right)\right\Vert \leq\epsilon\]
is a closed $C^{*}$-relation. 
\end{prop}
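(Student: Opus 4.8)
The plan is to check, for each of the three relations, the axioms R1--R4f defining a $C^*$-relation together with the closure axiom R4b. Since a finite family is automatically bounded, R4f is the special case of R4b for finitely many indices, so it is enough to verify R1, R2, R3 and R4b. Everything rests on two elementary observations about NC $*$-polynomials. First, every $*$-homomorphism $\varphi\colon A\to B$ commutes with polynomial evaluation, $\varphi\bigl(p(a_1,\dots,a_n)\bigr)=p\bigl(\varphi(a_1),\dots,\varphi(a_n)\bigr)$, because $p$ is a finite sum of words in the $a_i$ and their adjoints and $\varphi$ is linear, multiplicative and intertwines the involutions; there is no constant term to cause trouble. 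Second, in a $C^*$-algebra product $\prod_\lambda A_\lambda$ the coordinate projections are $*$-homomorphisms and an element is determined by its coordinates, so $p$ applied to families is computed coordinatewise, $p\bigl(\langle a_\lambda^{(1)}\rangle,\dots,\langle a_\lambda^{(n)}\rangle\bigr)=\bigl\langle p(a_\lambda^{(1)},\dots,a_\lambda^{(n)})\bigr\rangle$, provided that last family is bounded, which I address below.

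Axiom R1 is immediate: since $p$ has zero constant term, $p(0,\dots,0)=0$, and $0=0$, $0\ge 0$, $\|0\|=0\le\epsilon$ all hold. For R2 and R3 I would run through the three conditions "$b=0$", "$b\ge 0$", "$\|b\|\le\epsilon$" imposed on $b=p(a_1,\dots,a_n)$, transporting $b$ along a $*$-homomorphism via the first observation. A $*$-homomorphism preserves being zero and being positive (as $b=c^*c\mapsto\varphi(c)^*\varphi(c)$) and is norm-nonincreasing, giving R3 in all three cases; an injective $*$-homomorphism is isometric and reflects each of these conditions --- for positivity, via uniqueness of the decomposition of a self-adjoint element into its positive and negative parts --- giving R2.

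For R4b let $f_\lambda\colon\mathcal X\to A_\lambda$ be a bounded family of representations, set $f=\prod f_\lambda$, and write $b_\lambda=p(f_\lambda(x_1),\dots,f_\lambda(x_n))$. The family $\langle b_\lambda\rangle$ is bounded, since $\|b_\lambda\|$ is dominated by the value of the polynomial obtained from $p$ by replacing every coefficient with its absolute value, evaluated at the finite numbers $\sup_\lambda\|f_\lambda(x_1)\|,\dots,\sup_\lambda\|f_\lambda(x_n)\|$; hence $p(f(x_1),\dots,f(x_n))$ really is the element $\langle b_\lambda\rangle$ of $\prod_\lambda A_\lambda$. Now $\langle b_\lambda\rangle=0$ iff every $b_\lambda=0$; $\langle b_\lambda\rangle\ge 0$ iff every $b_\lambda\ge 0$, positivity in a product being coordinatewise; and $\|\langle b_\lambda\rangle\|=\sup_\lambda\|b_\lambda\|$, so $\|\langle b_\lambda\rangle\|\le\epsilon$ iff $\|b_\lambda\|\le\epsilon$ for all $\lambda$. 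In each of the three cases the right-hand condition holds because each $f_\lambda$ is a representation, so $f$ is a representation and R4b follows.

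The one step needing genuine care --- everything else is a routine unwinding of definitions --- is the boundedness claim inside R4b: one must know that a NC $*$-polynomial evaluated at a family of generators that is only assumed bounded again yields a norm-bounded family, so that $p(f(x_1),\dots,f(x_n))$ is actually an element of the $C^*$-algebra product. This is where uniform boundedness of the generators is essential, and it is precisely the reason these relations satisfy the closure axiom R4b but not, in general, the compactness axiom R4, the norm on a $C^*$-algebra product being a supremum that can be infinite when the family is unbounded.
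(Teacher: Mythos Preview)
Your proof is correct and follows essentially the same route as the paper's: verify R1, R2, R3, and R4b directly by using that $*$-homomorphisms commute with NC $*$-polynomial evaluation and that evaluation in a product is coordinatewise. Your version is more thorough---you treat all three relations and spell out the boundedness of $\langle b_\lambda\rangle$ and the reflection of positivity under injective $*$-homomorphisms---whereas the paper illustrates only the norm inequality and leaves the other cases as ``similar.''
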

\begin{proof}
Consider the last relation, for illustration. The other parts of the
proof are similar.

Certainly \[
\left\Vert p\left(0,0,0,\ldots,0\right)\right\Vert =\left\Vert 0\right\Vert =0\leq\epsilon.\]

The evaluation of a NC $*$-polynomial does not depend on the ambient
algebra. Therefore axiom \textbf{R2} holds.

NC $*$-polynomials are natural, so \textbf{R3} holds.

Given families $\left\langle x_{j}^{(\lambda)}\right\rangle $ with
\[
\sup_{\lambda}\left\Vert x_{j}^{(\lambda)}\right\Vert <\infty\]
we immediately get elements in the product $C^{*}$-algebra $\prod A_{\lambda}.$
All NC $*$-polynomials respect products, so\begin{eqnarray*}
 &  & \left\Vert p\left(\left\langle x_{1}^{(\lambda)}\right\rangle ,\left\langle x_{2}^{(\lambda)}\right\rangle ,\left\langle x_{3}^{(\lambda)}\right\rangle ,\ldots,\left\langle x_{n}^{(\lambda)}\right\rangle \right)\right\Vert \\
 & = & \left\Vert \left\langle p\left(x_{1}^{(\lambda)},x_{2}^{(\lambda)},x_{3}^{(\lambda)},\ldots,x_{n}^{(\lambda)}\right)\right\rangle \right\Vert \\
 & = & \sup_{\lambda}\left\Vert p\left(x_{1}^{(\lambda)},x_{2}^{(\lambda)},x_{3}^{(\lambda)},\ldots,x_{n}^{(\lambda)}\right)\right\Vert \\
 & \leq & \epsilon.\end{eqnarray*}
Therefore \textbf{R4b} holds. 
\end{proof}
The relation that inspired this study is $x\leq y,$ which is easily
shown to be RFD using an approximate unit. 

\begin{prop}
If $f$ is a continuous, real-valued function on $[0,\infty),$ then\[
\left\{ x^{*}=x,\ y^{*}=y,\ f(x)\leq f(y)\right\} \]
is a closed set of $C^{*}$-relations. 
\end{prop}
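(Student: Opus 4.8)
The plan is to regard the set as the conjunction of three $C^{*}$-relations and handle them almost separately. The relations $x^{*}=x$ and $y^{*}=y$ are the zero-sets of the NC $*$-polynomials $x-x^{*}$ and $y-y^{*}$, hence closed $C^{*}$-relations by Proposition~\ref{pro:NCpolyRelationsAreClosed}; and it is routine that \textbf{R4b} for a set of $C^{*}$-relations follows from \textbf{R4b} for each relation in it, since a bounded family of representations of the whole set is, for each relation separately, a bounded family of representations of that relation. So the real content is that the single relation $f(x)\leq f(y)$ is a closed $C^{*}$-relation. I read this relation in its only meaningful way: $(X,Y)$ in $A$ satisfies it when $X$ and $Y$ are positive---so that the continuous functional calculus supplies $f(X),f(Y)$, with $f(Y)-f(X)$ always lying in $A$---and $f(Y)-f(X)\geq 0$. (Equivalently one may extend $f$ to a continuous function on $\mathbb{R}$; no truth value changes.)

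Axioms \textbf{R1}--\textbf{R3} are the ``naturality of functional calculus'' axioms. \textbf{R1} is trivial. For \textbf{R3}, if $\varphi\colon A\to B$ is a $*$-homomorphism and $(X,Y)$ satisfies the relation in $A$, then $\varphi(X),\varphi(Y)$ are positive with $\sigma_{B}(\varphi(X))\subseteq\sigma_{A}(X)\cup\{0\}$, and $\varphi(f(X))=f(\varphi(X))$, $\varphi(f(Y))=f(\varphi(Y))$ by naturality; applying $\varphi$ to $f(Y)-f(X)\geq 0$ gives $f(\varphi(Y))-f(\varphi(X))\geq 0$. For \textbf{R2}, an injective $*$-homomorphism $\varphi\colon A\hookrightarrow B$ is isometric and preserves spectra and positivity, so if $\varphi\circ f$ satisfies the relation then $X,Y$ are positive in $A$, and $\varphi(f(Y)-f(X))=f(\varphi(Y))-f(\varphi(X))\geq 0$ forces $f(Y)-f(X)\geq 0$ in $A$, positivity of a self-adjoint element being intrinsic.

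The heart is \textbf{R4b}, with \textbf{R4f} its finite, automatically-bounded case. Let $(X_{\lambda},Y_{\lambda})_{\lambda}$ be a bounded family of representations of $f(x)\leq f(y)$, put $R=\sup_{\lambda}\max(\|X_{\lambda}\|,\|Y_{\lambda}\|)<\infty$, and form the positive elements $a=\langle X_{\lambda}\rangle$, $b=\langle Y_{\lambda}\rangle$ of $\prod_{\lambda}A_{\lambda}$. The key lemma is $f(a)=\langle f(X_{\lambda})\rangle$ and $f(b)=\langle f(Y_{\lambda})\rangle$. To see it: boundedness gives $\sigma(a)=\overline{\bigcup_{\lambda}\sigma(X_{\lambda})}\subseteq[0,R]$ (if $\mu$ has distance $d>0$ to every $\sigma(X_{\lambda})$ then $\|(X_{\lambda}-\mu)^{-1}\|\leq 1/d$ uniformly, so $a-\mu$ is invertible), so $f$ is continuous on $\sigma(a)$ and $\langle f(X_{\lambda})\rangle$ is a genuine bounded element of the product because $f$ is bounded on $[0,R]$; then each coordinate projection $\pi_{\lambda}$ is a $*$-homomorphism, so $\pi_{\lambda}(f(a))=f(\pi_{\lambda}(a))=f(X_{\lambda})$, and an element of a product $C^{*}$-algebra is determined by its coordinates. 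Hence $f(b)-f(a)=\langle f(Y_{\lambda})-f(X_{\lambda})\rangle\geq 0$ since every coordinate is positive, so $(a,b)$ satisfies the relation.

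I expect the main obstacle to be exactly this lemma $f(a)=\langle f(X_{\lambda})\rangle$: it is the one place boundedness is used essentially---to ensure $a$ and $b$ lie in the product at all, to confine $\sigma(a)$ to a fixed compact interval on which $f$ is continuous, and to keep $\langle f(X_{\lambda})\rangle$ bounded. In particular the relation is closed but not compact, matching the earlier examples. Everything else---\textbf{R1}--\textbf{R3} and the finite case \textbf{R4f}---is then routine bookkeeping with the continuous functional calculus.
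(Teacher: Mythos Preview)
Your proof is correct and is precisely the detailed version of what the paper only gestures at: the paper's entire proof is ``This follows from trivial facts such as $f(0)\leq f(0)$ and well-known facts about the functional calculus,'' and your verification of \textbf{R1}--\textbf{R4b} via naturality of the continuous functional calculus together with coordinatewise positivity in $\prod_\lambda A_\lambda$ is exactly what those well-known facts amount to. The one remark to tighten is the parenthetical about extending $f$ to $\mathbb{R}$---different extensions would yield different relations on merely self-adjoint pairs, so it is only harmless because you have already built positivity of $X,Y$ into the third relation; but this does not affect the argument.
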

\begin{proof}
This follows from trivial facts such as $f(0)\leq f(0)$ and well-known
facts about the functional calculus.
\end{proof}
\begin{prop}
The relation $x=x^{*}$ is closed $C^{*}$-relation.
\end{prop}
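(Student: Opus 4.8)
The plan is to verify the four axioms \textbf{R1}--\textbf{R4b} directly for the single relation $R$ given by $x = x^*$ on a one-element set $\mathcal{X} = \{x\}$. A representation of $R$ in a $C^*$-algebra $A$ is simply a self-adjoint element $a \in A$. First I would check \textbf{R1}: the zero element $0 \in \{0\}$ satisfies $0 = 0^*$, so the trivial function is a representation. For \textbf{R2}: if $\varphi\colon A \hookrightarrow B$ is an injective $*$-homomorphism and $\varphi(a) = \varphi(a)^* = \varphi(a^*)$, then injectivity of $\varphi$ forces $a = a^*$; in fact any $*$-homomorphism preserves adjoints, so this is immediate. For \textbf{R3}: if $a = a^*$ and $\psi\colon A \to B$ is a $*$-homomorphism, then $\psi(a) = \psi(a^*) = \psi(a)^*$, so $\psi(a)$ is self-adjoint. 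For \textbf{R4b}: given a bounded family of self-adjoint elements $a_\lambda \in A_\lambda$, the element $\langle a_\lambda \rangle$ in the product $C^*$-algebra $\prod A_\lambda$ is self-adjoint, since the adjoint in the product is computed coordinatewise; indeed one need not even use boundedness beyond what is built into the definition of the product.

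Since self-adjointness is preserved under all of these operations essentially by definition of a $*$-homomorphism and of the $C^*$-algebra product, there is really no obstacle here; the only point requiring a moment's thought is \textbf{R2}, and even there the observation that $*$-homomorphisms automatically commute with the involution makes injectivity unnecessary. I would write the proof as a brief remark that the verification is routine, along the lines of the preceding propositions, perhaps noting explicitly that this is the special case of Proposition~\ref{pro:NCpolyRelationsAreClosed} with $p(x) = x - x^*$ (which is a NC $*$-polynomial with zero constant term), so that the relation $x = x^*$ is closed as an immediate corollary. That framing avoids repeating the four-axiom check and ties the statement to what was just proved.

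Thus the cleanest write-up is a one-line deduction: apply Proposition~\ref{pro:NCpolyRelationsAreClosed} to the NC $*$-polynomial $p(x_1) = x_1 - x_1^*$, whose zero-set is exactly the relation $x = x^*$.

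\begin{proof}
Apply Proposition~\ref{pro:NCpolyRelationsAreClosed} to the NC $*$-polynomial $p(x_1) = x_1 - x_1^*$, which has constant term zero. Its zero-set $p(x_1) = 0$ is precisely the relation $x = x^*$, so this relation is closed.
\end{proof}
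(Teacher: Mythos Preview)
Your proposal is correct and matches the paper's own proof exactly: the paper simply states that this is a special case of Proposition~\ref{pro:NCpolyRelationsAreClosed}, which is precisely your one-line deduction using $p(x_1)=x_1-x_1^*$.
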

\begin{proof}
This a special case of Proposition~\ref{pro:NCpolyRelationsAreClosed}
\end{proof}
\begin{prop}
The relation $0\leq x$ is closed $C^{*}$-relation. 
\end{prop}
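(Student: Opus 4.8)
The plan is to verify the four axioms \textbf{R1}, \textbf{R2}, \textbf{R3}, \textbf{R4b} directly for the single relation $0 \leq x$ on $\mathcal{X} = \{x\}$, where a function $f : \{x\} \to A$ is a representation precisely when the element $f(x) \in A$ is positive. Axiom \textbf{R1} is immediate since $0$ is a positive element of $\{0\}$. For \textbf{R2}, I would use the fact that an injective $*$-homomorphism $\varphi : A \hookrightarrow B$ is isometric and that positivity of a self-adjoint element is detected by the spectrum: $\varphi(a) \geq 0$ in $B$ forces $\operatorname{sp}(\varphi(a)) \subseteq [0,\infty)$, and since $\varphi$ is injective $\operatorname{sp}_A(a) \subseteq \operatorname{sp}_B(\varphi(a)) \cup \{0\} \subseteq [0,\infty)$ (with the usual care about adjoining units), so $a \geq 0$ in $A$. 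Even more simply, one can invoke that $a \geq 0$ iff $a = b^*b$ for some $b$, and $\varphi(a) = \varphi(b)^*\varphi(b)$ being of this form pulls back because $\varphi$ is injective and one can write $a$ itself as $c^*c$: actually the cleanest route is just that positivity is a spectral (hence isometric-invariant) condition.

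Axiom \textbf{R3} follows because any $*$-homomorphism $\varphi : A \to B$ sends positive elements to positive elements: if $a = b^*b$ then $\varphi(a) = \varphi(b)^*\varphi(b) \geq 0$. For \textbf{R4b}, given a bounded family $f_\lambda : \{x\} \to A_\lambda$ of representations, so each $f_\lambda(x) \geq 0$ in $A_\lambda$ with $\sup_\lambda \|f_\lambda(x)\| < \infty$, the product element $\langle f_\lambda(x) \rangle$ lies in $\prod_\lambda A_\lambda$, and it is positive there because positivity in a product $C^*$-algebra is exactly coordinatewise positivity — each $f_\lambda(x) = b_\lambda^* b_\lambda$ with $\sup_\lambda \|b_\lambda\| = \sup_\lambda \|f_\lambda(x)\|^{1/2} < \infty$, so $\langle b_\lambda \rangle \in \prod_\lambda A_\lambda$ and $\langle f_\lambda(x) \rangle = \langle b_\lambda \rangle^* \langle b_\lambda \rangle$.

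Alternatively, and more economically, I would observe that $0 \leq x$ is the special case of the relation $f(x) \leq f(y)$ from the preceding proposition obtained by taking $f$ to be the identity function on $[0,\infty)$ and substituting $y \mapsto x$, $x \mapsto 0$ — or note it is subsumed by the matrix-positivity relation $0 \leq \left[\begin{smallmatrix} y & x^* \\ x & z \end{smallmatrix}\right]$ discussed earlier. The honest one-line proof is that this is the case $p(x) = x$ of the positivity part of Proposition~\ref{pro:NCpolyRelationsAreClosed}, since the monomial $x$ is a NC $*$-polynomial with zero constant term.

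I do not anticipate a genuine obstacle here; the only point requiring a moment's thought is \textbf{R2}, where one must remember that pulling positivity back along an injection is legitimate precisely because injective $*$-homomorphisms are isometric and spectrum-preserving (modulo the unitization subtlety), rather than trying to argue it via order-theoretic manipulations. Everything else is a direct appeal to standard $C^*$-algebra facts about the three equivalent characterizations of positivity (spectral, $b^*b$ form, and $\|t - a\| \leq t$ for large $t$) together with the already-established Proposition~\ref{pro:NCpolyRelationsAreClosed}.
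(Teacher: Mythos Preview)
Your proposal is correct and follows essentially the same approach as the paper: the paper's proof is a one-liner that singles out the only nontrivial axiom, \textbf{R2}, by citing Dixmier for the fact that positivity (equivalently, being of the form $y^*y$) does not depend on the ambient $C^{*}$-algebra, leaving \textbf{R1}, \textbf{R3}, \textbf{R4b} implicit. Your write-up simply spells out all four axioms and also records the shortcut via Proposition~\ref{pro:NCpolyRelationsAreClosed} with $p(x)=x$; there is no substantive difference.
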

\begin{proof}
A key fact about $C^{*}$-algebras is that positivity ($x=y^{*}y$
for some $y$) does not  depend on the ambient $C^{*}$-algebra either.
See, for example, Section 1.6.5 in \cite{DixmierCstarAlgebras}.
\end{proof}
\begin{prop}
If $f$ is a holomorphic function on the complex plane and $\epsilon$
is real number such that $\epsilon\geq|f(0)|$ then $\left\Vert f(x)\right\Vert \leq\epsilon$
is a closed $C^{*}$-relation.
\end{prop}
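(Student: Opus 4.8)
The plan is to verify the four defining axioms \textbf{R1}, \textbf{R2}, \textbf{R3}, \textbf{R4b} directly, using the only thing that gives $f(x)$ meaning in an arbitrary $C^{*}$-algebra: because $f$ is entire, its Taylor expansion $f(z)=\sum_{n\geq 0}c_{n}z^{n}$ has infinite radius of convergence, so for any $a$ in a $C^{*}$-algebra $A$ the series $f(a)=\sum_{n\geq 0}c_{n}a^{n}$ converges in norm, where $a^{0}$ is the unit (of $A$, or of the unitization $\widetilde{A}$ when $f(0)=c_{0}\neq 0$), with $\|f(a)\|\leq\sum_{n\geq 0}|c_{n}|\,\|a\|^{n}<\infty$. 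Given this, the verification rests on three familiar facts about the functional calculus: $a\mapsto f(a)$ is natural, i.e.\ $\varphi(f(a))=f(\varphi(a))$ for a $*$-homomorphism $\varphi$ (apply continuity and multiplicativity to the partial sums); $*$-homomorphisms are norm-decreasing; and injective $*$-homomorphisms are isometric.

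First I would dispatch \textbf{R1}: the zero function sends $x$ to $0$, so $f(0)=c_{0}1$ and $\|f(0)\|=|f(0)|\leq\epsilon$ by the standing hypothesis $\epsilon\geq|f(0)|$ --- this is the one place that hypothesis enters. For \textbf{R2}, if $\varphi\colon A\hookrightarrow B$ is injective and $\varphi(a)$ satisfies the relation, then naturality and isometry give $\|f(a)\|=\|\varphi(f(a))\|=\|f(\varphi(a))\|\leq\epsilon$. For \textbf{R3} the same line works with ``isometric'' weakened to ``contractive'': $\|f(\varphi(a))\|=\|\varphi(f(a))\|\leq\|f(a)\|\leq\epsilon$.

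The one axiom with real content is \textbf{R4b}. Given a bounded family $a_{\lambda}\in A_{\lambda}$ with $\|f(a_{\lambda})\|\leq\epsilon$ and $M:=\sup_{\lambda}\|a_{\lambda}\|<\infty$, I would form $a=\langle a_{\lambda}\rangle$ in $\prod_{\lambda}A_{\lambda}$ and use boundedness exactly as the paper uses it elsewhere, to license a limiting operation: since $\sum_{n\geq 0}|c_{n}|M^{n}<\infty$, the series $\sum_{n\geq 0}c_{n}a^{n}$ converges in $\prod_{\lambda}A_{\lambda}$ (or its unitization), and each coordinate projection is a $*$-homomorphism, hence continuous, so the $\lambda$-entry of $f(a)$ is $\sum_{n\geq 0}c_{n}a_{\lambda}^{n}=f(a_{\lambda})$. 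Hence $\|f(a)\|=\sup_{\lambda}\|f(a_{\lambda})\|\leq\epsilon$, which is \textbf{R4b}. One can see the same thing block-diagonally via $\prod_{\lambda}\mathbb{B}(\mathbb{H}_{\lambda})\subseteq\mathbb{B}(\bigoplus_{\lambda}\mathbb{H}_{\lambda})$ after faithfully and non-degenerately representing each $A_{\lambda}$.

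I expect the only genuine subtlety to be this interchange, namely $f(\langle a_{\lambda}\rangle)=\langle f(a_{\lambda})\rangle$: that applying an entire function commutes with forming the product $C^{*}$-algebra. This is precisely where the boundedness built into \textbf{R4b} is indispensable --- applied to an unbounded family an entire function need not even land in the product $C^{*}$-algebra, so without boundedness the assertion is false. The remaining bookkeeping, concerning the constant term and which unit $a^{0}$ denotes in the non-unital case, is absorbed into the ``well-known facts about the functional calculus'' already invoked for the preceding propositions, and I would not belabor it.
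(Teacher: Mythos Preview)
Your proof is correct and follows essentially the same approach as the paper's: both rely on the naturality of the holomorphic functional calculus, its independence from the ambient $C^{*}$-algebra, and its compatibility with products. Your version is more explicit---in particular you spell out \textbf{R4b} carefully via the Taylor expansion and the uniform bound $\sum|c_{n}|M^{n}<\infty$, whereas the paper's one-line proof literally says the functional calculus ``respects finite products since polynomial do,'' which on its face only verifies \textbf{R4f}; your argument fills exactly this gap.
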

\begin{proof}
The functional calculus is to be applied in the unitization of the
ambient $C^{*}$-algebra. We know $f(0)=f(0)\mathbb{1}$ for the zero
operator and so $\{0\}$ is a representation. The holomorphic functional
calculus is natural, does not depend on the surrounding $C^{*}$-algebra,
and respects finite products since polynomial do.
\end{proof}
\begin{prop}
The union of two closed sets of $C^{*}$-relations on the same set
is closed. If a set of $C^{*}$-relations on a set $\mathcal{X}$
is closed, the same properties applied to a larger set $\mathcal{Y}\supseteq\mathcal{X}$
form a closed set of $C^{*}$-relations.
\end{prop}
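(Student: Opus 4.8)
The plan is to reduce both assertions to routine verification of the axioms R1--R4f (for the individual statements) and R4b (for the sets as a whole), since nothing deeper is at play.

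For the union statement, let $\mathcal{R}_1$ and $\mathcal{R}_2$ be closed sets of $C^*$-relations on $\mathcal{X}$ and set $\mathcal{R}=\mathcal{R}_1\cup\mathcal{R}_2$. Each member of $\mathcal{R}$ is already a $C^*$-relation, so the only thing to establish is R4b for $\mathcal{R}$. Given a bounded family $f_\lambda\colon\mathcal{X}\to A_\lambda$ of representations of $\mathcal{R}$, each $f_\lambda$ is in particular a representation of $\mathcal{R}_1$ and of $\mathcal{R}_2$, since $\mathcal{R}_i\subseteq\mathcal{R}$. Closedness of $\mathcal{R}_1$ and of $\mathcal{R}_2$ then shows that $\prod f_\lambda$ satisfies every statement in $\mathcal{R}_1$ and every statement in $\mathcal{R}_2$, hence every statement in $\mathcal{R}$, which is exactly R4b.

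For the enlargement statement I would first make precise the phrase ``the same properties applied to a larger set'': given a $C^*$-relation $R$ on $\mathcal{X}$, let $\widehat{R}$ be the relation on $\mathcal{Y}$ declaring that $g\colon\mathcal{Y}\to A$ satisfies $\widehat{R}$ exactly when $g|_{\mathcal{X}}\colon\mathcal{X}\to A$ satisfies $R$, and put $\widehat{\mathcal{R}}=\{\widehat{R}\mid R\in\mathcal{R}\}$. The three elementary facts that drive everything are: restriction to $\mathcal{X}$ commutes with post-composition by a $*$-homomorphism; restriction to $\mathcal{X}$ commutes with forming products (both send $x$ to the tuple of values at $x$); and a family of functions bounded on $\mathcal{Y}$ has restrictions to $\mathcal{X}$ that are again bounded. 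Granting these, R1 for $\widehat{R}$ holds because the zero function on $\mathcal{Y}$ restricts to the zero function on $\mathcal{X}$; R2, R3, and R4f for $\widehat{R}$ follow from the corresponding axioms for $R$ via the first two facts; and R4b for $\widehat{\mathcal{R}}$ follows from R4b for $\mathcal{R}$, because a bounded family $g_\lambda\colon\mathcal{Y}\to A_\lambda$ of representations of $\widehat{\mathcal{R}}$ restricts to a bounded family of representations of $\mathcal{R}$, so $(\prod g_\lambda)|_{\mathcal{X}}=\prod (g_\lambda|_{\mathcal{X}})$ represents $\mathcal{R}$ and therefore $\prod g_\lambda$ represents $\widehat{\mathcal{R}}$.

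I do not anticipate a genuine obstacle here; the only point needing any care is pinning down the definition of $\widehat{R}$ so that the restriction maps interact cleanly with all four relation axioms, and noting that the boundedness hypothesis passes correctly to the restriction. Once those are stated explicitly, the verifications are immediate.
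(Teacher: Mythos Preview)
Your proof is correct and is exactly the routine verification the paper has in mind; the paper's own proof consists of the single sentence ``These statements should be obvious.'' You have simply spelled out the details that the author deemed self-evident.
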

\begin{proof}
These statements should be obvious.
\end{proof}
\begin{prop}
\label{pro:fractionalNCpolysAreClosed} If $p$ is a NC $*$-polynomial
in $x_{1},\ldots,x_{n}$ with constant term zero, and give $t_{1},\ldots,t_{m}$
positive exponents and possibly repeated indices $j_{1},\ldots,j_{m},$
then \[
\left\{ x_{j}^{*}=x_{j},\ p\left(x_{j_{1}}^{t_{1}},\ldots,x_{j_{m}}^{t_{m}}\right)\geq0\right\} \]
is a closed set of $C^{*}$-relations. If only integer powers of $x_{r}$
are used the relation $x_{r}^{*}=x_{r}$ may be dropped and the result
is still a closed set of $C^{*}$-relations.
\end{prop}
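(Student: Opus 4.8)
The plan is to reduce this to the three checks in the definition of a closed $C^{*}$-relation, exactly as in Proposition~\ref{pro:NCpolyRelationsAreClosed}, but being careful about the fractional powers. The key new ingredient is that for a self-adjoint element $a$ of a $C^{*}$-algebra and a positive real $t$, the power $a^{t}$ is defined via the continuous functional calculus applied to the function $s\mapsto |s|^{t}$ on the spectrum (or $s\mapsto s^{t}$ on $[0,\infty)$ if $a\geq 0$), and this functional calculus does not depend on the ambient $C^{*}$-algebra and respects $*$-homomorphisms. Indeed, the whole point of adjoining $x_{j}^{*}=x_{j}$ is to guarantee that $x_{j}^{t_{k}}$ makes sense for arbitrary positive $t_{k}$; without self-adjointness one would need $|s|^{t}$ to be interpreted through $|x|=(x^{*}x)^{1/2}$, which is why the last sentence of the statement restricts to integer powers.

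First I would verify \textbf{R1}: the zero function sends each $x_{j}$ to $0$, which is self-adjoint, and $0^{t_{k}}=0$, so $p(0,\ldots,0)=0\geq 0$ since $p$ has zero constant term. Next \textbf{R2}: if $\varphi:A\hookrightarrow B$ is an injective $*$-homomorphism and the images $\varphi(x_{j})$ are self-adjoint and satisfy $p(\varphi(x_{j_{1}})^{t_{1}},\ldots)\geq 0$ in $B$, then self-adjointness pulls back (a self-adjoint element of $\varphi(A)$ is the image of a self-adjoint element since $\varphi$ is injective $*$-preserving), the continuous functional calculus commutes with $\varphi$, hence $\varphi(p(x_{j_{1}}^{t_{1}},\ldots))=p(\varphi(x_{j_{1}})^{t_{1}},\ldots)\geq 0$, and positivity descends through injective $*$-homomorphisms (positivity is intrinsic, as noted in the proof of the proposition that $0\leq x$ is closed). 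For \textbf{R3}, surjective-or-not $*$-homomorphisms preserve self-adjointness, commute with functional calculus, and map positive elements to positive elements, so the relation is pushed forward.

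Finally \textbf{R4b} is the substantive step, and I expect the handling of fractional powers inside an infinite product to be the main obstacle. Given a bounded family $\langle x_{j}^{(\lambda)}\rangle_{\lambda}$ of representations, the element $y_{j}=\langle x_{j}^{(\lambda)}\rangle$ of $\prod_{\lambda}A_{\lambda}$ is self-adjoint, and the content to confirm is that $y_{j}^{t_{k}}=\langle (x_{j}^{(\lambda)})^{t_{k}}\rangle$; this is because the functional calculus in a product $C^{*}$-algebra is computed coordinatewise (the function $s\mapsto|s|^{t_{k}}$ is applied on the joint spectrum, which contains each local spectrum, and uniform norm bounds keep everything in the product). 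Then $p(y_{j_{1}}^{t_{1}},\ldots,y_{j_{m}}^{t_{m}})=\langle p((x_{j_{1}}^{(\lambda)})^{t_{1}},\ldots,(x_{j_{m}}^{(\lambda)})^{t_{m}})\rangle$, a family of positive elements, hence positive in the product since positivity in $\prod_{\lambda}A_{\lambda}$ is coordinatewise. This gives \textbf{R4b}, and all four axioms together with the union-of-closed-relations proposition (combining with $x_{j}^{*}=x_{j}$, which is closed by Proposition~\ref{pro:NCpolyRelationsAreClosed}) finish the first assertion. For the last sentence, when all exponents are positive integers, $x_{r}^{t_{k}}$ is literally an NC monomial, so the composite $p(x_{j_{1}}^{t_{1}},\ldots,x_{j_{m}}^{t_{m}})$ is itself an NC $*$-polynomial with zero constant term, and the relation that it be $\geq 0$ is closed directly by Proposition~\ref{pro:NCpolyRelationsAreClosed} with no self-adjointness hypothesis needed.
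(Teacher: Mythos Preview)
Your proposal is correct and follows exactly the approach the paper intends: the paper's own proof is the single sentence ``Pile the functional calculus higher and deeper,'' together with an illustrative example, and you have faithfully unpacked this into the verification of \textbf{R1}--\textbf{R3} and \textbf{R4b} using naturality and coordinatewise behaviour of the continuous functional calculus. The only remark worth making is that the paper's example takes $x$ positive rather than merely self-adjoint, so your parenthetical about interpreting $a^{t}$ via $s\mapsto s^{t}$ on $[0,\infty)$ is closer to what the author has in mind than the $|s|^{t}$ reading.
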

\begin{proof}
Pile the functional calculus higher and deeper. Just to illustrate,
we are talking about a relation such as $x^{\frac{1}{3}}yx^{\frac{2}{3}}\geq0$
applied to pairs $(x,y)$ of operators where $x$ is positive.
\end{proof}
Harder is the task of finding RFD relations. We start with the classic
that kicked off this investigation.

\begin{thm}
The set of $C^{*}$-relations\[
\left\{ x^{*}=x,\ y^{*}=y,\ x\leq y\right\} \]
 is RFD.
\end{thm}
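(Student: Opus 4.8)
The plan is to approximate by finite-rank compressions, in the spirit of the ``approximate unit'' remark above. A representation of this relation is a pair of self-adjoint operators $X,Y$ on some $\mathbb{H}$ with $X\leq Y$. I would fix an increasing net $(P_\lambda)$ of finite-rank projections on $\mathbb{H}$, directed by range inclusion, so that $P_\lambda\to 1$ strongly, and set
\[ X_\lambda = P_\lambda X P_\lambda, \qquad Y_\lambda = P_\lambda Y P_\lambda . \]
The claim is that each pair $(X_\lambda,Y_\lambda)$ is a finite-dimensional representation of the relation, that $\|X_\lambda\|\leq\|X\|$ and $\|Y_\lambda\|\leq\|Y\|$, and that $X_\lambda\to X$ and $Y_\lambda\to Y$ $*$-strongly. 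With the constants $C(x,r)=C(y,r)=r$ this is exactly the RFD condition. Note that compression handles an arbitrary representation at once, so Lemma~\ref{lem:RFDapproxCyclicSuffices} is not even needed here, although one could invoke it.

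The verifications are all routine. Self-adjointness of a compression $PAP$ of a self-adjoint $A$ is clear. The order is preserved because $\langle X_\lambda\eta,\eta\rangle = \langle X P_\lambda\eta, P_\lambda\eta\rangle \leq \langle Y P_\lambda\eta, P_\lambda\eta\rangle = \langle Y_\lambda\eta,\eta\rangle$, hence $X_\lambda\leq Y_\lambda$. The norm bounds are immediate from $\|P_\lambda\|\leq 1$. For the finite-dimensional essential subspace: every $\eta$ orthogonal to the range of $P_\lambda$ satisfies $X_\lambda\eta = Y_\lambda\eta = 0$, so the essential subspace of $(X_\lambda,Y_\lambda)$ is contained in the finite-dimensional space $P_\lambda\mathbb{H}$.

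Finally, strong convergence: since the $X_\lambda$ and $Y_\lambda$ are self-adjoint, strong and $*$-strong convergence coincide, and for $\eta\in\mathbb{H}$ one writes $X_\lambda\eta - X\eta = P_\lambda X(P_\lambda\eta - \eta) + (P_\lambda - 1)X\eta$, which tends to $0$ because $\|P_\lambda\|\leq 1$ and $P_\lambda\to 1$ strongly; likewise for $Y$. There is no genuinely hard step here: the only points that require a moment's thought are that compression is order preserving and that the compressed pair has finite-dimensional essential subspace, both of which are immediate. The same compression trick should apply to the other prospective RFD examples built only from self-adjointness and order.
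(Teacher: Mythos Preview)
Your proof is correct and follows essentially the same approach as the paper's: compress by finite-rank projections converging strongly to the identity and observe that compression preserves self-adjointness, order, and the norm bounds. The only cosmetic difference is that the paper invokes Lemma~\ref{lem:RFDapproxCyclicSuffices} to reduce to a separable $\mathbb{H}$ and then uses a \emph{sequence} $p_n$ of projections onto initial segments of a basis, whereas you work directly with a net of finite-rank projections and so bypass that reduction.
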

\begin{proof}
We dress L\"{o}wner's argument (\cite{BendatSherman}) in categorical
clothing.

By Lemma~\ref{lem:RFDapproxCyclicSuffices} we need only consider
representations on a separable Hilbert space $\mathbb{H}$. Suppose
$x$ and $y$ are bounded operators on $\mathbb{H}$ that are self-adjoint
and that $x\leq y.$ Let $p_{n}$ be the projection onto the first
$n$ elements in some fixed orthonormal basis. Define $x_{n}=p_{n}xp_{n}$
and $y_{n}=p_{n}yp_{n}$ so that $x_{n}$ and $y_{n}$ have norms
bounded by $\| x\|$ and $\| y\|$ and $0\leq x_{n}\leq y_{n}.$ The
operators $x$ and $y$ form finite dimensional relations, and they
converge $*$-strongly to $x$ and $y.$ 
\end{proof}
\begin{thm}
\label{lem:rightBoundOnRealPartIsRFD} Suppose $\beta>0$ is a real
number. The $C^{*}$-relation $\textnormal{Re }x\leq\beta$ is RFD.
\end{thm}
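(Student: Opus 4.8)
The plan is to re-use, essentially verbatim, the compression argument that proved $\{x^{*}=x,\ y^{*}=y,\ x\leq y\}$ is RFD; the extra one-sided bound $\textnormal{Re}\,x\leq\beta$ survives compression for free. First, by Lemma~\ref{lem:RFDapproxCyclicSuffices} it is enough to approximate cyclic representations, and a cyclic representation of $\{\textnormal{Re}\,x\leq\beta\}$ lives on a separable Hilbert space $\mathbb{H}$. So fix $x\in\mathbb{B}(\mathbb{H})$ with $\textnormal{Re}\,x\leq\beta$, fix an orthonormal basis of $\mathbb{H}$, let $p_{n}$ be the orthogonal projection onto the span of its first $n$ vectors, and set $x_{n}=p_{n}xp_{n}$.

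Next I would verify the three properties of $x_{n}$ demanded by the definition of RFD. First, $x_{n}$ is finite dimensional: every vector orthogonal to $\textnormal{ran}\,p_{n}$ is annihilated by both $x_{n}$ and $x_{n}^{*}$, so the essential subspace of $x_{n}$ is contained in the finite-dimensional space $\textnormal{ran}\,p_{n}$. Second, $\|x_{n}\|=\|p_{n}xp_{n}\|\leq\|x\|$, so the constant $C(x,r):=r$ controls the norms of the approximants. Third, $x_{n}$ satisfies the relation: $\textnormal{Re}\,x_{n}=p_{n}(\textnormal{Re}\,x)p_{n}\leq\beta p_{n}\leq\beta 1_{\mathbb{H}}$, where the first inequality is the compression by $p_{n}$ of $\textnormal{Re}\,x\leq\beta 1$ and the second uses $\beta\geq 0$, so that $\beta(1-p_{n})\geq 0$.

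Finally, $x_{n}\to x$ $*$-strongly: this is the standard fact that if projections increase strongly to $1$ then the compressions of a fixed bounded operator converge strongly to it, applied to $x$ and to $x^{*}$. Since each $x_{n}$ is a finite-dimensional representation of $\{\textnormal{Re}\,x\leq\beta\}$ with $\|x_{n}\|\leq\|x\|$, and the $x_{n}$ converge $*$-strongly to $x$, the relation is RFD with constants $C(x,r)=r$.

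There is no real obstacle here; the only point needing a moment's care is the observation that compression preserves the one-sided bound, i.e. that $p_{n}(\textnormal{Re}\,x)p_{n}\leq\beta p_{n}\leq\beta 1$, and this is precisely where the hypothesis $\beta>0$ (in fact $\beta\geq 0$ suffices) enters. Everything else is identical to the preceding theorem.
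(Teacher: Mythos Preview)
Your proof is correct and follows essentially the same approach as the paper: the paper simply remarks that the argument is ``almost identical'' to the compression proof for $\{x^{*}=x,\ y^{*}=y,\ x\leq y\}$, and its detailed version carries out precisely the computation $x_{n}+x_{n}^{*}=p_{n}(x+x^{*})p_{n}\leq 2\beta p_{n}\leq 2\beta$ that you give. Your write-up is in fact more complete than the paper's, explicitly verifying the finite-dimensionality, the norm bound $C(x,r)=r$, and the $*$-strong convergence.
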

\begin{proof}
This is almost identical to the last proof. \ifthenelse{\boolean{Details}}{We
adopt that notation. Given $x$ on $\mathbb{H}$ with $x+x^{*}\leq2\beta$
let $x_{n}=p_{n}xp_{n}.$ Then \begin{eqnarray*}
x_{n}+x_{n}^{*} & = & p_{n}(x+x^{*})p_{n}\\
 & \leq & p_{n}(2\beta)p_{n}\\
 & \leq & 2\beta\end{eqnarray*}
 and $\| x_{n}\|\leq\left\Vert x\right\Vert .$}{}
\end{proof}
\begin{thm}
Suppose $\beta>1$ is a real number. The $C^{*}$-relation $\left\Vert e^{\textnormal{Re }x}\right\Vert \leq\beta$
is RFD.
\end{thm}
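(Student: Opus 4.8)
The plan is to imitate the compression argument used for $\textnormal{Re}\,x \le \beta$ in Theorem~\ref{lem:rightBoundOnRealPartIsRFD}, but with the twist that compressing does not interact as simply with the exponential of the real part as it does with the real part itself. So the first thing I would do is reduce the question, via Lemma~\ref{lem:RFDapproxCyclicSuffices}, to a cyclic (hence separable-space) representation: we have a single operator $x$ on a separable Hilbert space $\mathbb{H}$ with $\|e^{\textnormal{Re}\,x}\| \le \beta$, equivalently $\textnormal{Re}\,x \le \log\beta$, and we must approximate $x$ $*$-strongly by finite-dimensional operators $x_n$ satisfying the same bound $\|e^{\textnormal{Re}\,x_n}\| \le \beta$ together with a norm bound $\|x_n\| \le C(\|x\|)$.

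The natural candidate is again $x_n = p_n x p_n$ with $p_n$ the projection onto the span of the first $n$ basis vectors; these have $\|x_n\| \le \|x\|$ and converge $*$-strongly to $x$. The point to verify is that $\textnormal{Re}\,x_n \le \log\beta$: but $\textnormal{Re}\,x_n = \tfrac12(p_n x p_n + p_n x^* p_n) = p_n(\textnormal{Re}\,x)p_n \le p_n(\log\beta)p_n \le \log\beta$ (using $\log\beta > 0$, which holds since $\beta > 1$ — this is exactly why the hypothesis $\beta > 1$ appears). Then $e^{\textnormal{Re}\,x_n}$, being the exponential of a self-adjoint operator $\le \log\beta$, has norm $\le e^{\log\beta} = \beta$. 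That is the whole argument for this choice of compression, and it is essentially the same as the previous proof once one passes from $\|e^{\textnormal{Re}\,x}\| \le \beta$ to the equivalent order relation $\textnormal{Re}\,x \le \log\beta$.

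The main obstacle — really the only point that needs care — is the translation step: one must know that $\left\{\,\|e^{\textnormal{Re}\,x}\| \le \beta\,\right\}$ and $\left\{\,\textnormal{Re}\,x \le \log\beta\,\right\}$ are the \emph{same} $C^*$-relation, i.e. have the same representations in every $C^*$-algebra, so that a finite-dimensional representation of one is a finite-dimensional representation of the other and RFD-ness transfers. This is a functional-calculus fact about the self-adjoint element $h = \textnormal{Re}\,x$: since $t \mapsto e^t$ is a strictly increasing continuous bijection of $\mathbb{R}$ onto $(0,\infty)$, we have $h \le \log\beta$ if and only if $\sigma(h) \subseteq (-\infty,\log\beta]$ if and only if $\sigma(e^h) \subseteq (0,\beta]$ if and only if $\|e^h\| \le \beta$ (positivity of $e^h$ giving $\|e^h\| = \max\sigma(e^h)$), and spectrum and continuous functional calculus are independent of the ambient $C^*$-algebra. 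Once this identification is recorded, one simply invokes (the already-proved) Theorem~\ref{lem:rightBoundOnRealPartIsRFD} with $\beta$ there equal to $\log\beta$ here, and the result is immediate; alternatively one writes out the two-line compression argument directly as above. I would present it the second way, since it keeps the proof self-contained and parallel to its predecessor.
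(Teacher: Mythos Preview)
Your proposal is correct and follows essentially the same route as the paper: both arguments rest on the observation that $\|e^{\textnormal{Re}\,x}\|\le\beta$ and $\textnormal{Re}\,x\le\log\beta$ define the same $C^{*}$-relation (via the spectral/functional-calculus reasoning you spell out), after which RFD-ness is immediate from Theorem~\ref{lem:rightBoundOnRealPartIsRFD} applied with the constant $\log\beta>0$. The paper simply invokes that earlier result rather than rewriting the compression argument inline, but the content is identical.
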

\begin{proof}
Since the real part of $x$ is Hermitian, the exponential of the real
part is hermitian with spectrum contained in the positive real line.
Therefore $\left\Vert e^{\textnormal{Re }x}\right\Vert \leq e^{\beta}$
holds if and only if $e^{\textnormal{Re }x}\leq e^{\beta}$ which
holds if and only if $\textnormal{Re }x\leq\ln(\beta).$ This relation
has the same representations as the relation in Lemma~\ref{lem:rightBoundOnRealPartIsRFD},
so must itself be RFD.
\end{proof}
\begin{thm}
The empty set of relations on any set $\mathcal{X}$ is a closed $C^{*}$-relation.
\end{thm}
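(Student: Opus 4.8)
The plan is to verify the four $C^*$-relation axioms (\textbf{R1}, \textbf{R2}, \textbf{R3}, \textbf{R4f}) together with the closure axiom \textbf{R4b} directly for the empty relation, since ``closed $C^*$-relation'' means a set of $C^*$-relations satisfying \textbf{R1}--\textbf{R4f} and additionally \textbf{R4b}. The key observation is that the empty set of relations imposes no constraints at all: \emph{every} function $f:\mathcal{X}\to A$ into \emph{every} $C^*$-algebra $A$ is vacuously a representation.

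First I would note \textbf{R1}: the unique function $\mathcal{X}\to\{0\}$ satisfies the empty relation vacuously (there are no statements in the empty set that could fail). Next, for \textbf{R2}, if $\varphi:A\hookrightarrow B$ is injective and $\varphi\circ f$ satisfies the empty relation, then so does $f$ --- again vacuously, because $f$ is automatically a representation of the empty set no matter what. The same vacuous reasoning handles \textbf{R3} (the pushforward $\varphi\circ f$ along any $*$-homomorphism is automatically a representation) and \textbf{R4f} (any finite product $\prod f_j$ is automatically a representation). Finally, \textbf{R4b}: for every bounded family $f_\lambda:\mathcal{X}\to A_\lambda$ of representations, the product function $\prod f_\lambda:\mathcal{X}\to\prod_\lambda A_\lambda$ is again a representation, once more because every function into any $C^*$-algebra represents the empty set.

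There is essentially no obstacle here; the entire content is the recognition that a vacuous quantification makes each axiom trivially true. One small point worth stating explicitly is that the product $C^*$-algebra $\prod_\lambda A_\lambda$ in \textbf{R4b} is a genuine $C^*$-algebra (it consists of bounded families), so $\prod f_\lambda$ really is a function into a $C^*$-algebra and hence a representation of $\emptyset$. I would close by remarking that this degenerate example is included because it is exactly what is needed to make the hypothesis ``$\mathcal{R}$ is residually finite dimensional'' in Theorem~\ref{thm:RFDimpliesClosedReduction} apply with the full content loaded into $\mathcal{S}$, or vice versa --- for instance, the statement that every bounded operator is a $*$-strong limit of finite-rank operators can be read off by taking $\mathcal{R}=\emptyset$.
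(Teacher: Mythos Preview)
Your argument correctly and efficiently shows that the empty set of relations is a closed $C^{*}$-relation: every function into every $C^{*}$-algebra is vacuously a representation, so axioms \textbf{R1}--\textbf{R4f} and \textbf{R4b} all hold trivially. Nothing is wrong with that.

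However, what the paper actually \emph{proves} here is not closedness but residual finite dimensionality. The paper's proof takes operators $a_j$ on a Hilbert space, chooses a net of finite-rank projections $p_\lambda$ converging strongly to the identity, and observes that $p_\lambda a_j p_\lambda$ is a finite-dimensional representation with $\|p_\lambda a_j p_\lambda\|\leq\|a_j\|$ converging $*$-strongly to $a_j$. That is precisely the verification of the RFD condition (with $C(x,r)=r$), not of \textbf{R4b}. The word ``closed'' in the statement is almost certainly a typo for ``RFD'': the theorem sits in the middle of a block of RFD examples, and the later corollary on positive NC $*$-polynomials explicitly invokes ``the null set of relations is RFD'' to apply Theorem~\ref{thm:RFDimpliesClosedReduction}.

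So your proof establishes the stated claim but not the claim the paper intends and uses. Ironically, your closing remark identifies exactly this: you note that what matters is taking $\mathcal{R}=\emptyset$ as the RFD hypothesis, which requires knowing the empty relation is RFD, not merely closed. To match the paper you would need to supply the compression argument with finite-rank projections.
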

\begin{proof}
This is more-or-less contained in \cite{GoodearlMenalRFDandFree}.
Given a Hilbert space $\mathbb{H}$ and operators $a_{j}$ on $\mathbb{H}$
with $j\in\mathcal{X},$ take any net of finite-rank projections $p_{\lambda}$
converging strongly to the identity. Then $p_{\lambda}a_{j}p_{\lambda}$
is bounded in norm by $\| a_{j}\|$ and converges $*$-strongly to
$a_{j}.$ 
\end{proof}
Many amalgamated products $A\ast_{C}B$ turn out to be RFD when $A$
and $B$ are RFD. The simplest theorem of this sort, proved in \cite{ExelLoringRFDfreeProd},
is that $A$ and $B$ being RFD implies $A\ast B$ is RFD. This generalizes
easily here to something very useful.

\begin{thm}
Suppose $\mathcal{X}$ and $\mathcal{Y}$ are disjoint sets. If $\mathcal{R}$
is an RFD set of $C^{*}$-relations on $\mathcal{X}$ and $\mathcal{S}$
is an RFD closed set of $C^{*}$-relations on $\mathcal{Y}$ then,
regarding both sets as relations on $\mathcal{X}\cup\mathcal{Y},$
the set $\mathcal{R}\cup\mathcal{S}$ is an RFD set of $C^{*}$-relations.
\end{thm}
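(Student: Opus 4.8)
The plan is to exploit that, since $\mathcal{X}$ and $\mathcal{Y}$ are disjoint, no relation in $\mathcal{R}\cup\mathcal{S}$ couples an $\mathcal{X}$-variable to a $\mathcal{Y}$-variable: a representation of $\mathcal{R}\cup\mathcal{S}$ on a Hilbert space $\mathbb{H}$ is precisely a function whose restriction to $\mathcal{X}$ represents $\mathcal{R}$ and whose restriction to $\mathcal{Y}$ represents $\mathcal{S}$. So I would approximate the two halves separately---the $\mathcal{X}$-half using the RFD property of $\mathcal{R}$, the $\mathcal{Y}$-half using the RFD property of $\mathcal{S}$---and glue the approximants coordinatewise. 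Because the topology on $\textnormal{rep}_{\mathcal{R}\cup\mathcal{S}}(\mathcal{X}\cup\mathcal{Y},\mathbb{H})$ is pointwise $*$-strong, it suffices to exhibit, for each representation $f$ and each basic neighborhood of $f$, one finite dimensional representation of $\mathcal{R}\cup\mathcal{S}$ lying in that neighborhood and meeting the required norm bounds.

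Concretely: let $C_{\mathcal{R}}$ and $C_{\mathcal{S}}$ be constants witnessing that $\mathcal{R}$ and $\mathcal{S}$ are RFD, and set $C(z,r)$ equal to $C_{\mathcal{R}}(z,r)$ for $z\in\mathcal{X}$ and to $C_{\mathcal{S}}(z,r)$ for $z\in\mathcal{Y}$; this will be the witness for $\mathcal{R}\cup\mathcal{S}$. Fix $f\in\textnormal{rep}_{\mathcal{R}\cup\mathcal{S}}(\mathcal{X}\cup\mathcal{Y},\mathbb{H})$ with $\|f(z)\|\le r_{z}$, points $z_{1},\ldots,z_{m}\in\mathcal{X}\cup\mathcal{Y}$, unit vectors $\xi_{1},\ldots,\xi_{k}$, and $\varepsilon>0$. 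Sort the $z_{i}$ into $\mathcal{X}$ and $\mathcal{Y}$. Applying the definition of RFD for $\mathcal{R}$ to $f|_{\mathcal{X}}$, with the bounds $r_{x}$ ($x\in\mathcal{X}$), produces a finite dimensional $h\in\textnormal{rep}_{\mathcal{R}}(\mathcal{X},\mathbb{H})$ with $\|h(x)\|\le C_{\mathcal{R}}(x,r_{x})$ for all $x$ and with $h(z_{i})$, $h(z_{i})^{*}$ within $\varepsilon$ of $f(z_{i})$, $f(z_{i})^{*}$ on each $\xi_{j}$, for those $z_{i}$ in $\mathcal{X}$; likewise the definition of RFD for $\mathcal{S}$ applied to $f|_{\mathcal{Y}}$ yields a finite dimensional $g\in\textnormal{rep}_{\mathcal{S}}(\mathcal{Y},\mathbb{H})$ with $\|g(y)\|\le C_{\mathcal{S}}(y,r_{y})$ taking care of the $z_{i}$ in $\mathcal{Y}$.

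Then I would glue: define $\hat{f}$ on $\mathcal{X}\cup\mathcal{Y}$ to agree with $h$ on $\mathcal{X}$ and with $g$ on $\mathcal{Y}$. Its restriction to $\mathcal{X}$ is the representation $h$ of $\mathcal{R}$ and its restriction to $\mathcal{Y}$ is the representation $g$ of $\mathcal{S}$, so $\hat{f}$ represents $\mathcal{R}\cup\mathcal{S}$, and $\|\hat{f}(z)\|\le C(z,r_{z})$ for all $z$. The one bookkeeping point is that $\hat{f}$ is finite dimensional: the orthogonal complement of the essential subspace of $\hat{f}$ is the intersection of the orthogonal complements of the essential subspaces of $h$ and of $g$, so the essential subspace of $\hat{f}$ is the (automatically closed) sum of two finite dimensional subspaces, hence finite dimensional. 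Since $\hat{f}$ sits in the chosen neighborhood of $f$ and the neighborhoods form a directed set, this yields a net of finite dimensional representations of $\mathcal{R}\cup\mathcal{S}$, all obeying $C(z,\cdot)$, converging $*$-strongly to $f$; that is the RFD condition.

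The proof is short, so the ``main obstacle'' is really just seeing the right picture: that the two RFD approximations can be carried out on one and the same $\mathbb{H}$ and superimposed with the $\mathcal{R}$-relations and $\mathcal{S}$-relations each ignoring the other's variables---exactly what disjointness of $\mathcal{X}$ and $\mathcal{Y}$ supplies. This is the relational analogue of the result in \cite{ExelLoringRFDfreeProd} that a free product of RFD $C^{*}$-algebras is RFD. In this gluing argument I do not expect to use closedness of $\mathcal{S}$ beyond what RFD already gives---what is invoked is that the $\mathcal{S}$-approximants can be taken finite dimensional---but it keeps the statement parallel with the $C^{*}$-algebra case and is harmless. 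One should also check the routine point that each relation of $\mathcal{R}$, reread on the larger index set $\mathcal{X}\cup\mathcal{Y}$ by ignoring the $\mathcal{Y}$-coordinates, still satisfies \textbf{R1}--\textbf{R4f}, so that $\mathcal{R}\cup\mathcal{S}$ on $\mathcal{X}\cup\mathcal{Y}$ is a legitimate set of $C^{*}$-relations.
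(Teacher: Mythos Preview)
Your proposal is correct and follows exactly the approach of the paper, only with considerably more detail spelled out. The paper's entire proof is the single observation you flag as ``the one bookkeeping point'': operators vanishing outside a finite-dimensional $\mathbb{H}_{1}$ and operators vanishing outside a finite-dimensional $\mathbb{H}_{2}$ together vanish outside $\mathbb{H}_{1}+\mathbb{H}_{2}$, so the glued approximant is again finite dimensional. Your remark that closedness of $\mathcal{S}$ is not actually used in the argument is accurate.
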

\begin{proof}
All we need to know is that if $A$ is a set of operators on $\mathbb{H}$
that are zero on the orthogonal complement of the finite dimensional
subspace $\mathbb{H}_{1},$ and if $B$ is a set of operators on $\mathbb{H}$
that are zero on the orthogonal complement of the finite dimensional
subspace $\mathbb{H}_{2},$ then the union is a set of operators that
is zero on the orthogonal complement of the subspace $\mathbb{H}_{1}+\mathbb{H}_{2},$
which is also finite dimensional. 
\end{proof}
\begin{prop}
\label{pro:softNCpolyRelationsAreRFD} If $p_{1},\ldots,p_{m}$ are
NC $*$-polynomials in $x_{1},\ldots,x_{n},$ are homogeneous of degrees
that can vary, and $\epsilon_{s}>0$ are real constants and $0\leq n_{1}\leq n_{2}\leq n$
then\begin{eqnarray*}
 &  & 0\leq x_{j},\quad(j=1,\ldots,n_{1})\\
 &  & x_{j}^{*}=x_{j},\quad(j=n_{1}+1,\ldots,n_{2})\\
 &  & \left\Vert p_{s}\left(x_{1},\ldots,x_{n}\right)\right\Vert \leq\epsilon_{s}\quad(s=1,\ldots,m)\end{eqnarray*}
 form a closed set of $C^{*}$-relations. 
\end{prop}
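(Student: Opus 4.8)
The plan is to get this directly from closedness facts already established, together with the proposition above which says that closedness of $C^{*}$-relations is preserved under finite unions on a common set and under enlarging the underlying set. Take $\mathcal{X}=\{x_1,\dots,x_n\}$ and check that each individual relation in the displayed system is a closed $C^{*}$-relation on $\mathcal{X}$. For $j\leq n_1$ the relation $0\leq x_j$ is closed by the proposition that $0\leq x$ is closed, extended from a one-variable set up to $\mathcal{X}$. For $n_1<j\leq n_2$ the relation $x_j^{*}=x_j$ is closed by the proposition that $x=x^{*}$ is closed, which is itself a case of Proposition~\ref{pro:NCpolyRelationsAreClosed}. For each $s$ the relation $\|p_s(x_1,\dots,x_n)\|\leq\epsilon_s$ is closed by Proposition~\ref{pro:NCpolyRelationsAreClosed}, because a homogeneous NC $*$-polynomial of positive degree has zero constant term, which is precisely the hypothesis needed there. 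The displayed system is a finite union of these closed relations, hence closed.

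The only inputs behind those citations that are not pure bookkeeping are the following. First, positivity in a product $C^{*}$-algebra is determined coordinatewise, and a bounded family of positive elements has positive product (via the bounded family of square roots), so for a bounded family $f_\lambda:\mathcal{X}\to A_\lambda$ of representations one has $\langle f_\lambda(x_j)\rangle\geq 0$ in $\prod_\lambda A_\lambda$ whenever each $f_\lambda(x_j)\geq 0$; this is the substance of the $0\leq x$ proposition. Second, NC $*$-polynomials respect products while $\|\langle p_s(f_\lambda(x_1),\dots,f_\lambda(x_n))\rangle\|=\sup_\lambda\|p_s(f_\lambda(x_1),\dots,f_\lambda(x_n))\|$, which is the substance of Proposition~\ref{pro:NCpolyRelationsAreClosed}. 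Axiom \textbf{R1} for the whole system is immediate: the zero function sends each $x_j$ to $0$, and $0\geq 0$, $0^{*}=0$, and $\|p_s(0,\dots,0)\|=0\leq\epsilon_s$ because positive homogeneity gives $p_s(0,\dots,0)=0$; axioms \textbf{R2} and \textbf{R3} descend from the constituent propositions. A degree-zero $p_s$ is a constant, and then $\|p_s\|\leq\epsilon_s$ is a genuine $C^{*}$-relation only when it is vacuous, so I would simply assume the $p_s$ have positive degree.

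Thus I expect no real obstacle for the statement as written; it is an assembly of Proposition~\ref{pro:NCpolyRelationsAreClosed}, the closedness of $x=x^{*}$ and of $0\leq x$, and the union-and-enlargement proposition. The genuinely harder companion question — suggested both by the name of this proposition and by the paper's running theme — is whether this same system is \emph{RFD}. There the usual device of compressing to $p_N x_j p_N$ for finite-rank projections $p_N\uparrow 1$ preserves positivity, self-adjointness, and the norm of each generator and converges $*$-strongly, but it does \emph{not} in general preserve the polynomial bounds $\|p_s(x_1,\dots,x_n)\|\leq\epsilon_s$; producing finite-dimensional approximants that also respect those bounds is where the homogeneity of the $p_s$ would have to be used, and I would treat that as a separate problem rather than fold it into this proof.
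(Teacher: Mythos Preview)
Your argument for closedness is correct and is exactly the assembly you describe: each constituent relation is closed by the earlier propositions, and closedness is preserved under finite union on a common variable set. There is nothing wrong with the proof as a proof of the statement as literally written.

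However, you should be aware that the paper's own proof does \emph{not} prove closedness; it proves that the system is RFD, which is what the label \texttt{pro:softNCpolyRelationsAreRFD} and every subsequent citation of the proposition require. The word ``closed'' in the displayed statement is evidently a typo for ``RFD.'' The paper's argument is precisely the ``harder companion question'' you flagged at the end: one takes a finite-rank, positive, quasi-central approximate identity $u_k$ for the compacts relative to $x_1,\ldots,x_n$, sets $x_{j,k}=u_k x_j u_k$, and uses quasi-centrality together with homogeneity of degree $d_s$ to get
\[
\lim_{k\to\infty}\bigl\|\,p_s(x_{1,k},\ldots,x_{n,k})-u_k^{2d_s}p_s(x_1,\ldots,x_n)\,\bigr\|=0,
\]
so that $\limsup_k\|p_s(x_{1,k},\ldots,x_{n,k})\|\le\|p_s(x_1,\ldots,x_n)\|$. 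A final rescaling by scalars $\alpha_{j,k}\le 1$ tending to $1$ forces the exact bounds $\|p_s(y_{1,k},\ldots,y_{n,k})\|\le\epsilon_s$ while preserving positivity, self-adjointness, finite essential support, the norm bounds $\|y_{j,k}\|\le\|x_j\|$, and $*$-strong convergence. So the mechanism you correctly identified as the obstacle---that naive compression $p_N x_j p_N$ need not respect the polynomial bounds---is overcome by replacing the compressions $p_N$ with a quasi-central approximate identity and then exploiting homogeneity.

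In short: your proof answers the question asked, but the question asked is not the question meant; the paper is proving the RFD claim you set aside.
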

\begin{proof}
Assume $x_{1},\ldots,x_{n}$ are in $\mathbb{B}(\mathbb{H})$ where
$\mathbb{H}$ is separable and that these satisfy the above relations.
Let $u_{k}$ be a countable approximate identity for the compact operators,
with $0\leq u_{k}\leq1,$ that is quasi-central for $x_{1},\ldots,x_{n}.$
Such an approximate identity exists by Corollary~3.12.16 in \cite{Pedersen-C*-algebrasBook}.
Applying a decreasing perturbation to each $u_{k}$ we may further
assume each $u_{k}$ is finite-rank.

Let $x_{j,k}=u_{k}x_{j}u_{k}.$ Clearly $\| x_{j,k}\|\leq\| x_{j}\|$
and $0\leq x_{j,k}$ for $j\leq n_{1}$ and $x_{j,k}^{*}=x_{j,k}$
for $n_{1}<j\leq n_{2}.$ Also $x_{j,k}\rightarrow x_{j}$ in the
$*$-strong topology. For fixed $k$ the $x_{j,k}$ all act as zero
on the complement of range of $u_{k},$ which is finite dimensional.
However, we need to modify the $x_{j,k}$ to make the last line of
relations hold.

Suppose $p_{s}$ is homogeneous of degree $d_{s}.$ This means $u_{k}$
appears $2d_{s}$ times in each monomial in $p_{s}.$ Since $u_{k}$
is quasi-central for the $x_{j}$ we have\[
\lim_{k\rightarrow\infty}\left\Vert p_{s}\left(x_{1,k},\ldots,x_{n,k}\right)-u_{k}^{2d_{s}}p_{s}\left(x_{1},\ldots,x_{n}\right)\right\Vert =0.\]
Therefore\begin{eqnarray*}
\limsup_{k\rightarrow\infty}\left\Vert p_{s}\left(x_{1,k},\ldots,x_{n,k}\right)\right\Vert  & = & \limsup_{k\rightarrow\infty}\left\Vert u_{k}^{2d_{s}}p_{s}\left(x_{1},\ldots,x_{n}\right)\right\Vert \\
 & \leq & \left\Vert p_{s}\left(x_{1},\ldots,x_{n}\right)\right\Vert .\end{eqnarray*}
It is easy to show that if $a_{\lambda}\rightarrow a$ in the strong
topology then\[
\liminf_{\lambda\rightarrow\infty}\left\Vert a_{\lambda}\right\Vert \geq\left\Vert a\right\Vert .\]
 The $x_{j,k}$ are bounded sequences converging to $x_{j}$ in the
$*$-strong topology, so\[
\lim_{k}p_{s}\left(x_{1,k},\ldots,x_{n,k}\right)=p_{s}\left(x_{1},\ldots,x_{n}\right)\quad(*\mbox{-strong})\]
which means\[
\limsup_{k\rightarrow\infty}\left\Vert p_{s}\left(x_{1,k},\ldots,x_{n,k}\right)\right\Vert =\left\Vert p_{s}\left(x_{1},\ldots,x_{n}\right)\right\Vert .\]

If $p_{s}\left(x_{1},\ldots,x_{n}\right)=0$ let $\alpha_{j,k}=1,$
and otherwise let \[
\alpha_{j,k}=\max\left(1,\left(\frac{\left\Vert p_{s}\left(x_{1},\ldots,x_{n}\right)\right\Vert }{\left\Vert p_{s}\left(x_{1,k},\ldots,x_{n,k}\right)\right\Vert }\right)^{\frac{1}{d_{s}}}\right).\]
Let $y_{j,k}=\alpha_{j,k}x_{j,k}.$ The $\alpha_{j,k}$ are are most
$1$ so $\| y_{j,k}\|\leq\| x_{j}\|.$ We are scaling by positive
factors so $0\leq y_{j,k}$ for $j\leq n_{1}$ and $y_{j,k}^{*}=y_{j,k}$
for $n_{1}<j\leq n_{2}.$ Since $\alpha_{j,k}\rightarrow1$ we see
that $y_{j,k}\rightarrow x_{j}$ in the $*$-strong topology. For
fixed $k$ the $y_{j,k}$ still all act as zero on the complement
of range of $u_{k}.$ What we have gained are the final relations,\begin{eqnarray*}
\left\Vert p_{s}\left(y_{1,k},\ldots,y_{n,k}\right)\right\Vert  & = & \left\Vert \alpha_{j,k}^{d_{s}}p_{s}\left(x_{1,k},\ldots,x_{n,k}\right)\right\Vert \\
 & = & \alpha_{j,k}^{d_{s}}\left\Vert p_{s}\left(x_{1,k},\ldots,x_{n,k}\right)\right\Vert \\
 & \leq & \epsilon_{s}.\end{eqnarray*}

\end{proof}

\section{Applications}

We have several corollaries to Theorem\ \ref{thm:RFDimpliesClosedReduction}.
All these results refer to the operator norm or order relations. Of
course, we recover the result of L\"{o}wner that matrix monotone
for all orders implies operator monotone.

\begin{cor}
Let $a$ be a bounded operator. Then\[
\left\Vert e^{a}\right\Vert \leq\left\Vert e^{\textnormal{Re}(a)}\right\Vert .\]

\end{cor}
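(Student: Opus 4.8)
The plan is to recognize this as an instance of the meta-theorem (Theorem~\ref{thm:RFDimpliesClosedReduction}) applied to a pair of relations on a single variable $\mathcal{X}=\{x\}$. On the hypothesis side I would take, for each real $\beta > 1$, the relation $\mathcal{R}_\beta = \{\,\|e^{\textnormal{Re}\,x}\| \leq \beta\,\}$, which is RFD by the theorem immediately preceding the corollary. On the conclusion side I would take $\mathcal{S}_\beta = \{\,\|e^x\| \leq \beta\,\}$. Since $f$ is a holomorphic function on the whole plane and $|e^0| = 1 \leq \beta$, the relation $\|e^x\| \leq \beta$ is closed by the proposition on holomorphic functional calculus bounds (the one stating $\|f(x)\|\leq\epsilon$ is closed when $\epsilon \geq |f(0)|$). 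So the two structural hypotheses of the meta-theorem — $\mathcal{R}_\beta$ RFD, $\mathcal{S}_\beta$ closed — are in hand for free.

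The real content is the matrix case: I must check that every \emph{finite-dimensional} representation of $\mathcal{R}_\beta$ is a representation of $\mathcal{S}_\beta$. Concretely: if $A$ is a matrix (acting on a finite-dimensional space) with $\|e^{\textnormal{Re}\,A}\| \leq \beta$, then $\|e^A\| \leq \beta$. This is exactly the finite-dimensional statement $\|e^A\| \leq \|e^{\textnormal{Re}(A)}\|$, which is classical. The cleanest route is to use the Lie product formula $e^A = \lim_{n\to\infty}\bigl(e^{A/2n}\,e^{A^*/2n}\bigr)^n$ — wait, more precisely, writing $A = H + iK$ with $H = \textnormal{Re}\,A$ and $K$ the imaginary part (both Hermitian), one has $e^A = \lim_{n\to\infty}\bigl(e^{H/n}\,e^{iK/n}\bigr)^n$. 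Since $e^{iK/n}$ is unitary, $\|e^{H/n}\,e^{iK/n}\| = \|e^{H/n}\| = \|e^H\|^{1/n}$, so $\bigl\|(e^{H/n}e^{iK/n})^n\bigr\| \leq \|e^H\|$, and taking $n\to\infty$ gives $\|e^A\| \leq \|e^H\| = \|e^{\textnormal{Re}(A)}\|$. (Equivalently one can invoke the Golden--Thompson-type submultiplicativity bound, or simply cite Bhatia; the Lie-product argument is self-contained.)

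With the matrix case established, the meta-theorem upgrades it to arbitrary bounded operators: for each $\beta > 1$, every bounded operator $a$ with $\|e^{\textnormal{Re}(a)}\| \leq \beta$ satisfies $\|e^a\| \leq \beta$. Finally I would remove the parameter by a supremum argument: given an arbitrary bounded operator $a$, set $\beta_0 = \|e^{\textnormal{Re}(a)}\|$ and note that $a$ is a representation of $\mathcal{R}_\beta$ for every $\beta > \beta_0$ (if $\beta_0 \geq 1$; and if $\beta_0 < 1$ one uses any $\beta > 1$, or handles it directly since $e^{\textnormal{Re}(a)} \geq 0$ forces $\beta_0 \geq$ the spectral radius which can indeed be below $1$ — a mild edge case worth a sentence), hence $\|e^a\| \leq \beta$ for all such $\beta$, hence $\|e^a\| \leq \beta_0 = \|e^{\textnormal{Re}(a)}\|$.

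I expect the only genuine obstacle to be bookkeeping: making sure the relation $\|e^x\|\leq\beta$ is literally covered by the earlier "holomorphic functional calculus" proposition (it is, with $f(z) = e^z$), and handling the case $\|e^{\textnormal{Re}(a)}\| < 1$ cleanly in the final supremum step rather than leaving a gap for operators whose real part is very negative. The analytic heart — the matrix inequality $\|e^{H+iK}\| \leq \|e^H\|$ — is standard and short via the Lie product formula.
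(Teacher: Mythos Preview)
This is essentially the paper's argument: rephrase the inequality as the family of implications $\left\Vert e^{\textnormal{Re}(a)}\right\Vert\leq\alpha\Rightarrow\left\Vert e^{a}\right\Vert\leq\alpha$ for $\alpha\geq 1$, note that the left side is RFD and the right side is closed, and invoke Theorem~\ref{thm:RFDimpliesClosedReduction}; the paper simply cites Bhatia (Theorem~IX.3.1 of \cite{BhatiaMatrixAnalysis}) for the matrix case rather than giving your Lie-product computation. One remark on your edge case $\left\Vert e^{\textnormal{Re}(a)}\right\Vert<1$: the paper disposes of it by asserting $\left\Vert e^{x}\right\Vert\geq 1$ for every operator $x$, which as written is false ($x=-I$ gives norm $e^{-1}$), so your instinct to flag it is sound; the quick repair for both proofs is to replace $a$ by $a+cI$ for large real $c$ and cancel the common scalar factor $e^{c}$.
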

\begin{proof}
Theorem IX.3.1 of \cite{BhatiaMatrixAnalysis} tells us this result
is true for any matrix, and indeed with any unitarily invariant norm.

Since $\left\Vert e^{x}\right\Vert \geq1$ for any operator $x,$
we can rephrase this to say that for each $\alpha\geq1,$ we have\[
\left\Vert e^{\textnormal{Re}(a)}\right\Vert \leq\alpha\implies\left\Vert e^{a}\right\Vert \leq\alpha.\]
As the first relation is RFD and the second is closed, we are done
by Theorem\ \ref{thm:RFDimpliesClosedReduction}. 
\end{proof}
The NC $*$-polynomial version (in the original variables, not their
fractional powers) of the following can be proven by Helton's sum-of-squares
theorem, Theorem\ 1.1 in \cite{McCulloughPutinarNCsumofSquares},
which is essentially in \cite{HeltonPositiveNCpoly}. 

\begin{cor}
Suppose that $p$ is a NC $*$-polynomial in $x_{1},\ldots,x_{n}$
with constant term zero, and that $t_{1},\ldots,t_{m}$ are positive
exponents and $j_{1},\ldots,j_{m}$ are between $1$ and $n.$ If
\[
p\left(x_{j_{1}}^{t_{1}},\ldots,x_{j_{m}}^{t_{m}}\right)\geq0\]
for all self-adjoint matrices $x_{1},\ldots,x_{n}$ then the same
hold true for all self-adjoint operators on Hilbert space. If only
integer powers of $x_{r}$ are used the relation $x_{r}^{*}=x_{r}$
may be dropped.
\end{cor}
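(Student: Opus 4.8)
This is an instance of the meta-theorem, Theorem~\ref{thm:RFDimpliesClosedReduction}. The plan is to take for $\mathcal{R}$ the self-adjointness relations assumed in the hypothesis and for $\mathcal{S}$ those same relations together with the positivity relation. Concretely, in the main case set
\[
\mathcal{R} = \{\, x_j^{*} = x_j : j = 1, \ldots, n \,\}, \qquad \mathcal{S} = \mathcal{R} \cup \{\, p\bigl(x_{j_1}^{t_1}, \ldots, x_{j_m}^{t_m}\bigr) \geq 0 \,\},
\]
both regarded as $C^*$-relations on $\{x_1, \ldots, x_n\}$; in the integer-power variant one simply omits from $\mathcal{R}$ (and hence from $\mathcal{S}$) the relations $x_r^{*} = x_r$ for those $r$ occurring only with integer exponents, the argument being otherwise identical. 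By Proposition~\ref{pro:fractionalNCpolysAreClosed}, $\mathcal{S}$ is a closed set of $C^*$-relations, and the hypothesis of the corollary is precisely the assertion that every tuple of self-adjoint matrices is a representation of $\mathcal{S}$.

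Next I would check that $\mathcal{R}$ is RFD, with constants $C(x_j,\rho) = \rho$. This is the finite-rank compression argument already used above (for Theorem~\ref{lem:rightBoundOnRealPartIsRFD} and for the empty set of relations): by Lemma~\ref{lem:RFDapproxCyclicSuffices} we may work on a separable $\mathbb{H}$, and compressing a self-adjoint tuple by finite-rank projections $p_k$ increasing strongly to $1$ gives self-adjoint tuples whose norms do not grow, whose essential subspaces are finite-dimensional, and which converge $*$-strongly to the original tuple.

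The substantive point is the remaining hypothesis of Theorem~\ref{thm:RFDimpliesClosedReduction}: that every finite-dimensional representation of $\mathcal{R}$ is a representation of $\mathcal{S}$. Such a representation is a tuple of self-adjoint operators on some Hilbert space $\mathbb{H}$ whose essential subspace $E$ is finite-dimensional; both $E$ and $E^{\perp}$ reduce every operator in the tuple. On $E$ the tuple is literally a tuple of self-adjoint matrices, so by hypothesis $p(x_{j_1}^{t_1}, \ldots, x_{j_m}^{t_m})$ restricted to $E$ is positive, while on $E^{\perp}$ each $x_j$ is zero, so $x_j^{t_j} = 0^{t_j} = 0$ (here $t_j > 0$ is used) and, since $p$ has zero constant term, $p(x_{j_1}^{t_1}, \ldots, x_{j_m}^{t_m})$ restricted to $E^{\perp}$ equals $p(0,\ldots,0) = 0$. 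Because the functional calculi and the polynomial evaluation respect the reducing decomposition $\mathbb{H} = E \oplus E^{\perp}$, the full operator $p(x_{j_1}^{t_1}, \ldots, x_{j_m}^{t_m})$ is a direct sum of positive operators, hence positive, so the tuple represents $\mathcal{S}$. Theorem~\ref{thm:RFDimpliesClosedReduction} then gives that every representation of $\mathcal{R}$ — every self-adjoint tuple on an arbitrary Hilbert space — is a representation of $\mathcal{S}$, which is the claim. I expect this last reduction, from ``finite-dimensional representation of $\mathcal{R}$'' to ``tuple of matrices plus a harmless zero block,'' to be the only step requiring any care, and it works precisely because $p$ has no constant term and all exponents are positive; everything else is just matching the corollary to the hypotheses of the meta-theorem.
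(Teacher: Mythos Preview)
Your argument is correct and follows the same route as the paper: apply Theorem~\ref{thm:RFDimpliesClosedReduction} with $\mathcal{S}$ given by Proposition~\ref{pro:fractionalNCpolysAreClosed} and with an $\mathcal{R}$ that is RFD by the finite-rank compression trick. The paper's one-line proof simply says ``the null set of relations is RFD,'' whereas you take $\mathcal{R}$ to be the self-adjointness relations; your choice is the more accurate one in the fractional-exponent case, since $\mathcal{R}=\emptyset$ would force you to show that \emph{arbitrary} matrix tuples satisfy $\mathcal{S}$, which is not what the hypothesis gives. You also spell out the essential-subspace decomposition for finite-dimensional representations, which the paper leaves implicit; this is exactly where the ``constant term zero'' and ``positive exponents'' assumptions are used, and you identify that correctly.
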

\begin{proof}
The null set of relations is RFD so Theorem\ \ref{thm:RFDimpliesClosedReduction}
still applies.
\end{proof}
\begin{cor}
Let $a,$ $b$ and $x$ be a bounded operators, with $a\geq0$ and
$b\geq0$. Then, for $0\leq\nu\leq1,$\[
\left\Vert a^{\nu}xb^{1-\nu}+a^{1-\nu}xb^{\nu}\right\Vert \leq\left\Vert ax+xb\right\Vert .\]

\end{cor}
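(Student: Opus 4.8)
The plan is to invoke Theorem~\ref{thm:RFDimpliesClosedReduction}, exactly as in the preceding corollaries. First I would rephrase the inequality as an implication between two $C^{*}$-relations on the set $\mathcal{X}=\{a,b,x\}$. Fix a real constant $\gamma>0$. The hypothesis side should be the set of relations
\[
a\geq0,\quad b\geq0,\quad \left\Vert ax+xb\right\Vert \leq\gamma,
\]
and the conclusion side, for a fixed $\nu$ with $0\leq\nu\leq1$, should be
\[
\left\Vert a^{\nu}xb^{1-\nu}+a^{1-\nu}xb^{\nu}\right\Vert \leq\gamma.
\]
Since $\gamma$ is arbitrary, establishing the implication ``hypothesis $\implies$ conclusion'' for every $\gamma$ is equivalent to the stated norm inequality.

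Next I would check the two structural hypotheses of the meta-theorem. The conclusion relation is closed by Proposition~\ref{pro:fractionalNCpolysAreClosed} (taking $p(y_{1},y_{2},y_{3})=y_{1}y_{3}+\text{(appropriate monomial)}$ in the fractional powers $a^{\nu},b^{1-\nu},a^{1-\nu},b^{\nu}$, together with $a^{*}=a$, $b^{*}=b$): indeed $\left\Vert a^{\nu}xb^{1-\nu}+a^{1-\nu}xb^{\nu}\right\Vert \leq\gamma$ is a closed $C^{*}$-relation because it is a norm bound on a continuous functional-calculus expression in positive operators, and the norm-bounded case of Proposition~\ref{pro:fractionalNCpolysAreClosed}-style reasoning (together with the closedness of $\left\Vert\cdot\right\Vert\leq\gamma$ relations from Proposition~\ref{pro:NCpolyRelationsAreClosed}) applies. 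The hypothesis relation is RFD: $a\geq0$ and $b\geq0$ are each RFD (via an approximate unit, or as a special case of Proposition~\ref{pro:softNCpolyRelationsAreRFD} with no polynomial constraints), and $\left\Vert ax+xb\right\Vert\leq\gamma$ is a soft relation built from a homogeneous NC $*$-polynomial $ax+xb$, so Proposition~\ref{pro:softNCpolyRelationsAreRFD} shows the conjunction is RFD provided one runs the quasi-central approximate-identity argument so that the scaling step preserves $a\geq0$ and $b\geq0$ (scaling by positive scalars does).

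The remaining ingredient is the finite-dimensional case: I would cite that for positive matrices $A,B$ and any matrix $X$ one has $\left\Vert A^{\nu}XB^{1-\nu}+A^{1-\nu}XB^{\nu}\right\Vert \leq\left\Vert AX+XB\right\Vert$ for all unitarily invariant norms; this is a known matrix result (see Bhatia--Davis, or Theorem~IX.4.5-style results in \cite{BhatiaMatrixAnalysis} on the Heinz-type inequalities). Granting that, every finite-dimensional representation of the hypothesis relation satisfies the conclusion relation, so Theorem~\ref{thm:RFDimpliesClosedReduction} delivers the implication for all bounded operators on Hilbert space, and letting $\gamma\to\left\Vert ax+xb\right\Vert$ gives the stated inequality.

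I expect the main obstacle to be verifying RFD-ness of the hypothesis relation cleanly: one must check that the finite-rank cutoffs $a_{k}=u_{k}au_{k}$, $b_{k}=u_{k}bu_{k}$, $x_{k}=u_{k}xu_{k}$ coming from a quasi-central approximate identity simultaneously preserve positivity of $a_{k},b_{k}$ (they do, since $u_{k}au_{k}\geq0$) and, after the positive rescaling of Proposition~\ref{pro:softNCpolyRelationsAreRFD}, still satisfy the norm bound $\left\Vert a_{k}x_{k}+x_{k}b_{k}\right\Vert\leq\gamma$ while converging $*$-strongly to $a,b,x$. This is essentially a repackaging of the proof of Proposition~\ref{pro:softNCpolyRelationsAreRFD}, but one should be careful that the relevant polynomial $ax+xb$ is homogeneous (degree $2$ in $u_{k}$), which it is, so the argument goes through verbatim.
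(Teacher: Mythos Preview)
Your proposal is correct and follows essentially the same route as the paper: cite the matrix version of the inequality from \cite{BhatiaMatrixAnalysis} (the paper points specifically to Corollary~IX.4.10), use Proposition~\ref{pro:softNCpolyRelationsAreRFD} to see that the hypothesis relations $a\geq0$, $b\geq0$, $\left\Vert ax+xb\right\Vert\leq\gamma$ are RFD, use Proposition~\ref{pro:fractionalNCpolysAreClosed} (or its obvious norm-bound variant) to see the conclusion relation is closed, and apply Theorem~\ref{thm:RFDimpliesClosedReduction}. Your additional checks---homogeneity of $ax+xb$ and preservation of positivity under the quasi-central cutoffs and rescaling---are exactly the points implicit in the paper's one-line appeal to those propositions.
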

\begin{proof}
This is in \cite{BhatiaMatrixAnalysis} as Corollary IX.4.10, restricted
to matrices but for any unitarily invariant norm. Using the operator
norm version of that result, Proposition\ \ref{pro:softNCpolyRelationsAreRFD}
and Proposition\ \ref{pro:fractionalNCpolysAreClosed} we find again
the the operator result follows from the matrix result.
\end{proof}
\begin{cor}
If $C$ is a constant so that\begin{equation}
\| a\|\leq1\mbox{ and }b\geq0\implies\left\Vert ab^{\frac{1}{2}}-b^{\frac{1}{2}}a\right\Vert \leq C\left\Vert ab-ba\right\Vert ^{\frac{1}{2}}\label{eq:commutatorSquareRoot}\end{equation}
for all matrices $a$ and $b,$ then (\ref{eq:commutatorSquareRoot})
is true for all bounded operators on Hilbert space (or $C^{*}$-algebra
elements).
\end{cor}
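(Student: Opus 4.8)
The plan is to read (\ref{eq:commutatorSquareRoot}) as a one-parameter family of implications ``RFD relation implies closed relation'' and to hit each member with Theorem~\ref{thm:RFDimpliesClosedReduction}. Fix a real number $\epsilon>0$ and work on the set $\mathcal{X}=\{x,y\}$. Let $\mathcal{R}_{\epsilon}$ be the relations
\[
0\le y,\qquad \|x\|\le 1,\qquad \|xy-yx\|\le\epsilon,
\]
and let $\mathcal{S}_{\epsilon}$ be the relations
\[
0\le y,\qquad \|xy^{\frac{1}{2}}-y^{\frac{1}{2}}x\|\le C\epsilon^{\frac{1}{2}}.
\]
First I would note that $\mathcal{R}_{\epsilon}$ is RFD: it is exactly the shape handled by Proposition~\ref{pro:softNCpolyRelationsAreRFD}, with $y$ the single positive variable, $p_{1}=x$ homogeneous of degree $1$ with bound $1$, and $p_{2}=xy-yx$ homogeneous of degree $2$ with bound $\epsilon$ (this is why the case $\epsilon=0$ is excluded here). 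Then I would check that $\mathcal{S}_{\epsilon}$ is closed: the relation $0\le y$ is closed, and the soft fractional relation $\|xy^{\frac{1}{2}}-y^{\frac{1}{2}}x\|\le C\epsilon^{\frac{1}{2}}$ is closed by the argument used for Propositions~\ref{pro:NCpolyRelationsAreClosed} and~\ref{pro:fractionalNCpolysAreClosed} --- the $\frac{1}{2}$-power functional calculus on positive elements is intrinsic to the $C^{*}$-algebra, is natural, and respects finite products, so inside a product $\prod A_{\lambda}$ one has $\langle b_{\lambda}\rangle^{\frac{1}{2}}=\langle b_{\lambda}^{\frac{1}{2}}\rangle$ and the norm of a family is the supremum of the component norms, which is precisely what \textbf{R1}, \textbf{R2}, \textbf{R3} and \textbf{R4b} require.

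The step with real content is the hypothesis of Theorem~\ref{thm:RFDimpliesClosedReduction}, namely that every finite-dimensional representation of $\mathcal{R}_{\epsilon}$ is a representation of $\mathcal{S}_{\epsilon}$. Here I would take such a representation $(A,B)$ on a Hilbert space $\mathbb{H}$ with finite-dimensional essential space $\mathbb{E}$, and invoke the remark preceding the first lemma: since $A$, $B$ and their adjoints vanish on $\mathbb{E}^{\perp}$, that subspace --- hence $\mathbb{E}$ --- is reducing, so $A=a_{0}\oplus 0$ and $B=b_{0}\oplus 0$ where $a_{0},b_{0}$ are matrices, and therefore $B^{\frac{1}{2}}=b_{0}^{\frac{1}{2}}\oplus 0$. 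The relations of $\mathcal{R}_{\epsilon}$ force $\|a_{0}\|\le 1$, $b_{0}\ge 0$ and $\|a_{0}b_{0}-b_{0}a_{0}\|\le\epsilon$, so the matrix hypothesis (\ref{eq:commutatorSquareRoot}) gives
\[
\|a_{0}b_{0}^{\frac{1}{2}}-b_{0}^{\frac{1}{2}}a_{0}\|\le C\|a_{0}b_{0}-b_{0}a_{0}\|^{\frac{1}{2}}\le C\epsilon^{\frac{1}{2}},
\]
and the zero summand contributes nothing, so $(A,B)$ satisfies $\mathcal{S}_{\epsilon}$.

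With these pieces in hand, Theorem~\ref{thm:RFDimpliesClosedReduction} yields that every representation of $\mathcal{R}_{\epsilon}$ is a representation of $\mathcal{S}_{\epsilon}$. To finish, take bounded operators $a,b$ (or, via a faithful representation and axiom \textbf{R2}, elements of any $C^{*}$-algebra) with $\|a\|\le 1$ and $b\ge 0$, and set $\epsilon=\|ab-ba\|$. If $\epsilon=0$ then $a$ lies in the commutant of $b$, hence commutes with $b^{\frac{1}{2}}\in C^{*}(b)$, and (\ref{eq:commutatorSquareRoot}) reads $0\le 0$; if $\epsilon>0$ then $(a,b)$ represents $\mathcal{R}_{\epsilon}$, so it represents $\mathcal{S}_{\epsilon}$, which is exactly $\|ab^{\frac{1}{2}}-b^{\frac{1}{2}}a\|\le C\epsilon^{\frac{1}{2}}=C\|ab-ba\|^{\frac{1}{2}}$. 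I do not expect a deep obstacle: the points needing care are the reduction in the middle paragraph (a finite-dimensional representation is a matrix padded with zeros, to which the matrix hypothesis applies verbatim), the bookkeeping that places the two norm bounds $\|x\|\le 1$ and $\|xy-yx\|\le\epsilon$ simultaneously under Proposition~\ref{pro:softNCpolyRelationsAreRFD}, and the separate treatment of the degenerate case $\epsilon=0$.
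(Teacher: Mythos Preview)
Your proposal is correct and follows essentially the same route as the paper: rephrase (\ref{eq:commutatorSquareRoot}) as a family of implications between the RFD relation $\{\|x\|\le 1,\ y\ge 0,\ \|xy-yx\|\le\epsilon\}$ (via Proposition~\ref{pro:softNCpolyRelationsAreRFD}) and the closed relation $\{y\ge 0,\ \|xy^{1/2}-y^{1/2}x\|\le C\epsilon^{1/2}\}$ (via the argument behind Proposition~\ref{pro:fractionalNCpolysAreClosed}), then invoke Theorem~\ref{thm:RFDimpliesClosedReduction}. Your version is more careful than the paper's sketch in spelling out the finite-dimensional reduction and in treating the degenerate case $\epsilon=0$ separately, both of which the paper leaves implicit.
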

\begin{proof}
We all hope that the constant $C=1$ works here, c.f.~\cite{BhatiaKittanehInequalitiesNormsCommutators,PedersenCommutatorInequality}.
Perhaps this reduction to the matrix case will make that easier to
prove.

We can rephrase this as\[
\| a\|\leq1,\ b\geq0,\ \left\Vert ab-ba\right\Vert \leq\delta\implies\left\Vert ab^{\frac{1}{2}}-b^{\frac{1}{2}}a\right\Vert \leq C\delta^{\frac{1}{2}}.\]
The set of relations on the left is RFD by Proposition\ \ref{pro:softNCpolyRelationsAreRFD}
and those on the right are closed by Proposition\ \ref{pro:fractionalNCpolysAreClosed}.
\end{proof}


\begin{thebibliography}{10}

\bibitem{ArchboldRFD}
R.~J. Archbold.
\newblock On residually finite-dimensional {$C\sp *$}-algebras.
\newblock {\em Proc. Amer. Math. Soc.}, 123(9):2935--2937, 1995.

\bibitem{BendatSherman}
Julius Bendat and Seymour Sherman.
\newblock Monotone and convex operator functions.
\newblock {\em Trans. Amer. Math. Soc.}, 79:58--71, 1955.

\bibitem{BhatiaMatrixAnalysis}
Rajendra Bhatia.
\newblock {\em Matrix analysis}, volume 169 of {\em Graduate Texts in
  Mathematics}.
\newblock Springer-Verlag, New York, 1997.

\bibitem{BhatiaKittanehInequalitiesNormsCommutators}
Rajendra Bhatia and Fuad Kittaneh.
\newblock Some inequalities for norms of commutators.
\newblock {\em SIAM J. Matrix Anal. Appl.}, 18(1):258--263, 1997.

\bibitem{BlackadarOperatorAlgebras}
B.~Blackadar.
\newblock {\em Operator algebras}, volume 122 of {\em Encyclopaedia of
  Mathematical Sciences}.
\newblock Springer-Verlag, Berlin, 2006.
\newblock Theory of $C{\sp{*}}$-algebras and von Neumann algebras, Operator
  Algebras and Non-commutative Geometry, III.

\bibitem{Blackadar-shape-theory}
Bruce Blackadar.
\newblock Shape theory for {$C\sp \ast$}-algebras.
\newblock {\em Math. Scand.}, 56(2):249--275, 1985.

\bibitem{DixmierCstarAlgebras}
Jacques Dixmier.
\newblock {\em {$C\sp*$}-algebras}.
\newblock North-Holland Publishing Co., Amsterdam, 1977.
\newblock Translated from the French by Francis Jellett, North-Holland
  Mathematical Library, Vol. 15.

\bibitem{EilersExelSoftTorusRFD}
S{\o}ren Eilers and Ruy Exel.
\newblock Finite-dimensional representations of the soft torus.
\newblock {\em Proc. Amer. Math. Soc.}, 130(3):727--731 (electronic), 2002.

\bibitem{ExelLoringRFDfreeProd}
Ruy Exel and Terry~A. Loring.
\newblock Finite-dimensional representations of free product {$C\sp
  *$}-algebras.
\newblock {\em Internat. J. Math.}, 3(4):469--476, 1992.

\bibitem{GoodearlMenalRFDandFree}
K.~R. Goodearl and P.~Menal.
\newblock Free and residually finite-dimensional {$C\sp *$}-algebras.
\newblock {\em J. Funct. Anal.}, 90(2):391--410, 1990.

\bibitem{Hadwin-Kaonga-Mathes}
Don Hadwin, Llolsten Kaonga, and Ben Mathes.
\newblock Noncommutative continuous functions.
\newblock {\em J. Korean Math. Soc.}, 40(5):789--830, 2003.

\bibitem{HeltonPositiveNCpoly}
J.~William Helton.
\newblock ``{P}ositive'' noncommutative polynomials are sums of squares.
\newblock {\em Ann. of Math. (2)}, 156(2):675--694, 2002.

\bibitem{LoringCstarRelations}
Terry~A. Loring.
\newblock {$C\sp *$}-algebra relations.
\newblock http://arxiv.org/abs/0807.4988, 2008.

\bibitem{McCulloughPutinarNCsumofSquares}
Scott McCullough and Mihai Putinar.
\newblock Noncommutative sums of squares.
\newblock {\em Pacific J. Math.}, 218(1):167--171, 2005.

\bibitem{Pedersen-C*-algebrasBook}
Gert~K. Pedersen.
\newblock {\em {$C\sp{\ast} $}-algebras and their automorphism groups},
  volume~14 of {\em London Mathematical Society Monographs}.
\newblock Academic Press Inc. [Harcourt Brace Jovanovich Publishers], London,
  1979.

\bibitem{PedersenCommutatorInequality}
Gert~K. Pedersen.
\newblock A commutator inequality.
\newblock In {\em Operator algebras, mathematical physics, and low-dimensional
  topology ({I}stanbul, 1991)}, volume~5 of {\em Res. Notes Math.}, pages
  233--235. A K Peters, Wellesley, MA, 1993.

\bibitem{PhillipsProCstarAlg}
N.~Christopher Phillips.
\newblock Inverse limits of {$C\sp *$}-algebras and applications.
\newblock In {\em Operator algebras and applications, Vol.\ 1}, volume 135 of
  {\em London Math. Soc. Lecture Note Ser.}, pages 127--185. Cambridge Univ.
  Press, Cambridge, 1988.

\end{thebibliography}
\end{document}